\newtheorem{thm}{Theorem}[section]
\newtheorem{lem}[thm]{Lemma}
\newtheorem{prop}[thm]{Proposition}
\newtheorem{cor}[thm]{Corollary}
\newtheorem{question}{Question}
\theoremstyle{definition}
\newtheorem{dfn}[thm]{Definition}
\newtheorem{rem}[thm]{Remark}
\newtheorem{example}[thm]{Example}
\def\hpic #1 #2 {\mbox{$\begin{array}[c]{l} \epsfig{file=#1,height=#2} \end{array}$}}
\def\vpic #1 #2 {\mbox{$\begin{array}[c]{l} \epsfig{file=#1,width=#2} \end{array}$}}
\def\ignore #1 {}
\def\R{\mbox{{$\mathbb{R}$}}}
\def\N{\mbox{{$\mathbb{N}$}}}
\def\A{\mbox{$\forall$}}
\renewcommand{\d}{\delta}
\newcommand{\g}{\gamma}
\renewcommand{\A}{{\mathcal A}}
\newcommand{\C}{{\mathcal C}}
\newcommand{\e}{\epsilon}
\newcommand{\s}{\sigma}
\renewcommand{\t}{\tau}
\newcommand{\Sym}{S}
\newcommand{\D}{\Delta}
\newcommand{\Leb}{\mu_{\text{Leb}}}
\newcommand{\bdry}{\partial}
\def \aaron #1 {\marginpar{#1 -AA}}
\newcommand{\AMP}{{\mathcal {A}}^{\rm{mp}}}
\newcommand{\bmu}{\mbox{\boldmath $\mu$}}
\newcommand{\bmun}{\mbox{\boldmath $\mu$}_n}
\newcommand{\bs}{\mbox{\boldmath $\sigma$}}
\DeclareMathOperator{\Path}{Path}
\DeclareMathOperator{\Int}{Int}
\DeclareMathOperator{\Drift}{Drift}
\DeclareMathOperator{\Order}{Order}
\DeclareMathOperator{\Max}{Max}
\DeclareMathOperator{\Min}{Min}
\begin{document}
   \DeclareGraphicsExtensions{.pdf,.gif,.jpg}

\title{Distributions of order patterns of interval maps}
\author[ABLLP]{Aaron Abrams, Eric Babson, Henry Landau, Zeph Landau, James Pommersheim}

\begin{abstract}
A permutation $\s$ describing the relative orders of the first $n$ iterates of a point $x$ 
under a self-map $f$ of the interval $I=[0,1]$ is called an \emph{order pattern}.
For fixed $f$ and $n$, measuring the points $x\in I$ (according to Lebesgue measure) that 
generate the order pattern $\s$ gives a probability distribution $\bmu_n(f)$ on the set 
of length $n$ permutations.  We study the distributions that arise this way for various 
classes of functions $f$.  

Our main results treat the class of measure preserving functions.
We obtain an exact description of the set of realizable distributions in this case:
for each $n$ this set is a union of open faces of the polytope of flows on a certain 
digraph, and a simple combinatorial criterion determines which faces are included.
We also show that for general $f$, apart from an obvious compatibility condition, 
there is no restriction on the sequence $\{\bmu_n(f)\}_{n=1,2,\ldots}$.

In addition, we give a necessary condition for $f$ to have \emph{finite exclusion type},
i.e., for there to be finitely many order patterns that generate all order patterns not realized 
by $f$.  Using entropy we show that if $f$ is piecewise continuous, 
piecewise monotone, and either ergodic or with points of arbitrarily high period, then $f$
cannot have finite exclusion type.  This generalizes results of S.~Elizalde. 
\end{abstract}

\maketitle

Given a function $f: [0,1] \to [0,1]$, it is natural to examine properties of the sequence of iterates 
of $f$ beginning at some point $x\in [0,1]$: 
$$x, f(x), f^2(x) \dots.$$
The \emph{order pattern} for a sequence of distinct reals $y_1,y_2,\dots,y_n$ is the 
permutation $\s \in S_n$ that ranks the elements in increasing order; specifically,
$y_i<y_j$ if and only if $\s(i)<\s(j)$.    A number of authors 
have explored the relationship between functions $f$ and the set of order patterns 
realized by the iterates of $f$.   Work of C.~Bandt, G.~Keller, B.~Pompe, J.~M.~Amig\'o, 
M.~Kennel, and M.~Misiurewicz \cite{bkp,bp,ak,m} relates the number of distinct order 
patterns arising from a function $f$ to the entropy of $f$.
S.~Elizalde and others \cite{sergi1,sergi2,sergi3} have examined which and how many 
order patterns do not arise for particular functions and classes of functions. 

Here we take a slightly broader view and investigate the collection of distributions 
of order patterns that particular classes of functions achieve.  Specifically, if $I=[0,1]$ is equipped 
with Lebesgue measure and $f$ is almost aperiodic (meaning that the set of points with 
finite orbit has measure zero) then $f$ induces a probability distribution
$\bmu_n(f)$ on $S_n$ in a natural way:
$$\bmu_n(f)(\s)=\Leb\{x \mid \Order(x,f(x),\dots,f^{n-1}(x))=\s\}.$$
We shall focus on the functions $\bmu_n$ as well as the function $\bmu$
which maps $f$ to the sequence $(\bmu_1(f),\bmu_2(f),\dots)$.

Throughout the paper we consider functions with the property that 
almost all orbits are infinite:
$$\A=\{f:I\to I \mid \Leb(I_{ap})=1\}$$
where $I_{ap}$ is the set of aperiodic points, i.e., points with infinite orbit.
We address the following natural questions:  if $\C\subset \A$ is a collection of functions, then
\begin{question}
What is $\bmun(\C)$?
\end{question}
\begin{question}
What is $\bmu(\C)$?
\end{question}

We begin by answering both questions for the class $\C = \A$. 
For any $f$, the distributions $\bmu_n(f)$, $n=2,3, \dots$ must satisfy a certain
compatibility condition.   In Theorem \ref{cantor} we show that this is the only constraint 
on what is realizable for arbitrary $f\in\A$:  that is, for any sequence $\{\mu_n\}_{n\geq 1}$ 
of compatible distributions on $S_n$, there is a function $f\in\A$ which simultaneously 
satisfies $\bmun(f)=\mu_n$.

We then turn our attention to the class of measure preserving functions, 
$$\C=\AMP=\{f\in\A \mid f \mbox{ preserves } \Leb\}.$$  
Our main theorem (Theorem \ref{main}) provides a complete answer to Question 1
for $\C=\AMP$.  It is easy to see that the conclusion of Theorem \ref{cantor} cannot
hold for $\C=\AMP$; in fact we observe that $\bmu_n(\AMP)$ is contained in a polytope
$P_n$ consisting of all (normalized) flows on a certain digraph, which we call a
\emph{permutation digraph}.  We then show that $\bmu_n(\AMP)$ is a union of open 
faces of $P_n$ including the top-dimensional face, and we give a combinatorial criterion 
for determining whether or not a given open face of $P_n$ is contained in $\bmu_n(\AMP)$.

To prove the main theorem we introduce the fundamental notion of \emph{drift}.
Naively, if one wants to construct $f\in\AMP$ realizing a given distribution $\mu$,
one might chop the interval into several subintervals and define $f$ to permute
the intervals to produce the desired frequencies.  Problems soon arise, however:
for example if we want half the mass of the interval to have iterates with order pattern 
$(1 3 2)$ and the other half $(2 1 3)$ then we quickly realize that this is impossible,
because $f^2$ would move all the mass to the right, which is impossible for a
measure preserving function.  This is the essence of drift, and
the upshot of Theorem \ref{main} is that this is the only obstruction:  
a face of $P_n$ either has drift or not, and the faces contained in $\bmu_n(\AMP)$ 
are exactly those without drift.

Finally, we discuss the relationship between the entropy of $f$ and a property
we call \emph{finite exclusion type}.  The latter is equivalent to $f$ having
finitely many \emph{basic forbidden patterns}, in the language introduced
by Amig\'o, Elizalde, and Kennel \cite{sergi1}; these properties mean that there are 
finitely many fixed patterns such that every permutation either arises as an order 
pattern of iterates of $f$ or contains one of the forbidden patterns.  A function with
finite entropy can realize at most exponentially many permutations of length $N$
(see \cite{bkp}), but using the notion of drift we show that quite often, a function
with finite exclusion type must realize a super-exponential number of permutations.
In particular, if either $f$ is continuous and has points of arbitrarily large period or 
$f$ is ergodic, then $f$ cannot have both finite entropy and finite exclusion type;
see Corollary \ref{entropyresult}.  This generalizes results from \cite{sergi2}.

The paper is organized as follows.  We introduce some language and give our result
for $\C=\A$ in Section 1, although we defer the proof to Section \ref{cantorproof}.
Sections 2-4 develop the combinatorial ideas required for our main theorem,
including several preliminary results about permutation digraphs and drift.  The main theorem
is stated and proved in Section \ref{mainsec}.  Our discussion of entropy and
finite exclusion type makes up Section 6, and Section \ref{cantorproof} contains
the proof of Theorem \ref{cantor}.  We close with some open questions in Section 8.

\section*{Acknowledgments}

The authors thank Sergi Elizalde for suggesting this line of research and for
several useful conversations along the way.  Thanks also to Julie Landau for her 
hospitality and kick serve.

\section{Generalities}

In this section we introduce some language and notation which will be used throughout the
paper, and we state our first result, Theorem \ref{cantor}, which says that if no
restriction is placed on $f$, then one can always find $f$ realizing a given
compatible sequence of permutation distributions.

\subsection*{Order patterns}
For a positive integer $n$ we denote $\{1,\dots,n\}$ by $[n]$ and
the group of bijections of $[n]$ by $S_n$.

Let $g$ be an injective map from a finite totally ordered set $X=\{x_1,\ldots,x_n\}$ 
(where $x_1<x_2<\cdots<x_n$) to a totally ordered set $Y$.  Let $y_i=g(x_i)$.
We define the \emph{order pattern} $\Order(g)$ to be the unique permutation 
$\s\in S_n$ satisfying $y_i<y_j$ if and only if $\s(i)<\s(j)$.  Equivalently
$y_{\s^{-1}(1)}<\cdots<y_{\s^{-1}(n)}$.  Note that if $\s\in S_n$ then $\Order(\s)=\s$.
The \emph{order pattern} of an $n$-tuple of distinct real numbers $(x_1,\dots,x_n)$
is $\Order(x_1,\dots,x_n)=\Order(g)$ where $g:[n]\to\R$ takes $i$ to $x_i$.

There is a restriction map $\rho:S_{n+1}\to S_n$ given by $\rho(\s)=\Order (\s|_{[n]})$.
Using this we define $S_\infty$ as 
$\{(\s_1,\s_2,\dots) \mid \s_i\in S_i, \rho(\s_{i+1})=\s_i \ \forall i\geq1\}$,
which is equal to the inverse limit of the maps $\rho:S_{n+1}\to S_n$.
Let $S=\cup_{n=1}^\infty S_n$.  The set $S$ is graded by $n$ and we use notation like
$S_{\geq n}$ to mean $\cup_{j=n}^\infty S_j$.

\subsection*{Distributions}
Next, let $\D_n$ be the space of probability distributions on $S_n$.  Note that $\D_n$ is the 
standard simplex in $\R^{S_n} \cong \R^{n!}$.  We denote by $\chi_\s$ the vertex of $\D_n$
which has mass 1 at $\s\in S_n$ and 0 elsewhere.

If $\mu\in\D_n$ and $\mu'\in\D_{n+1}$ we say $\mu$ and $\mu'$ are \emph{compatible}
if $\mu(\s)=\sum\limits_{\rho(\s')=\s} \mu'(\s')$.  

Then $\D_\infty=\{(\mu_1,\mu_2,\dots) \mid \mu_i\in\D_i, \mu_{i+1}\mbox{ and }\mu_i
\mbox{ are compatible } \forall i\geq 1 \}$, and $\D=\cup_{n=1}^\infty \D_n$.
As an example, the uniform distributions from each $\D_n$ form a compatible
sequence, hence an element of $\D_\infty$.

\subsection*{Induced distributions}
For $f\in\A$ and $x\in I_{ap}$ let $\bs^f(x)=(\bs_1^f(x),\bs_2^f(x),\dots)\in S_\infty$ where
$$\bs_n^f(x)=\Order(x,f(x),\dots,f^{n-1}(x)).$$
Let $\bmu_n:\A\to\D_n$ be the map taking a function $f\in\A$ to the distribution
defined by $$\bmu_n(f)(\s)=\Leb\{x \mid \bs_n^f(x)=\s\}.$$
Note that for any $f$ and $n$, the distributions $\bmu_n(f)$ and $\bmu_{n+1}(f)$ are 
compatible; thus we may define
$\bmu:\A\to \D_\infty$ by $\bmu(f)= (\bmu_1(f), \bmu_2(f),\dots)\in\D_\infty$.

We can now state our first result.

\begin{thm} \label{cantor}
For every $\mu = (\mu _1, \mu_2, \dots ) \in \D _{\infty}$ there exists a 
function $f\in\A$ with $\bmu(f)=\mu$.  That is, $\bmu(\A)=\D_\infty$.
\label{lem:cantor}
\end{thm}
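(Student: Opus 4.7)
The plan is to build $f$ by an explicit Cantor-style nested interval construction on $I$, coded by order patterns. Using the compatibility of $(\mu_n)$, I would first produce a nested family of half-open interval partitions $\{A_\sigma^n\}_{\sigma \in S_n,\, n \geq 1}$ of $I$ with $\Leb(A_\sigma^n) = \mu_n(\sigma)$ and $A_\sigma^n = \bigsqcup_{\rho(\sigma') = \sigma} A_{\sigma'}^{n+1}$. This yields a measurable coding map $\phi \colon I \to S_\infty$, $\phi(x) = (\sigma : x \in A_\sigma^n)_n$, which pushes $\Leb$ forward to the Kolmogorov extension $\nu$ of $(\mu_n)$. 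The left-to-right arrangement of the children inside each parent cell is a free parameter that will be used below.

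To define $f$, I would introduce the ``left-shift'' $\lambda \colon S_{n+1} \to S_n$ that removes the first entry of a permutation and re-ranks the rest. A quick check shows $\rho \circ \lambda = \lambda \circ \rho$, so these assemble into a shift $\Lambda \colon S_\infty \to S_\infty$. For any candidate $f \in \A$ the natural iterate-pattern coding $\phi_f$ satisfies $\phi_f(f(x)) = \Lambda \phi_f(x)$ almost everywhere, so I would define $f$ by declaring $f(x)$ to be an element of $\phi^{-1}(\Lambda \phi(x))$, extending the image of $\phi$ by a Lebesgue-null set through the $\Lambda$-orbit of the support of $\nu$ so that $f$ is defined almost everywhere. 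The crux is to choose the arrangement of cells in $I$ so that the natural pattern coding $\phi_f$ of this $f$ coincides with $\phi$: for $\nu$-a.e.\ $\tau \in S_\infty$ and each $n$, the $n$ points $\phi^{-1}(\tau), \phi^{-1}(\Lambda \tau), \ldots, \phi^{-1}(\Lambda^{n-1} \tau)$ must appear in $I$ in the order $\sigma_n(\tau) \in S_n$. I would accomplish this by inductive placement of children within parent cells, using the rank $\sigma'(n+1)$ to decide the position of $A_{\sigma'}^{n+1}$ inside $A_{\rho(\sigma')}^n$.

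Granting the arrangement and the $f$, one has $\bmu(f) = \mu$ as an immediate consequence of $\phi_* \Leb = \nu$ together with $\phi_f = \phi$. The last condition $f \in \A$ is secured by arranging the construction so that $\Lambda$-periodic points in $S_\infty$ form a $\nu$-null set; in the degenerate case where $\mu$ forces atomic mass at a periodic $\tau$, a further modification of $f$ on a Lebesgue-null set removes those periodic orbits without changing $\bmu(f)$. The main technical obstacle is the coordination of the arrangement across all levels simultaneously: the orderings required at level $n$ couple all $n!$ cells and interlock with the placements at every higher level, so one must verify that the commutation $\rho \circ \lambda = \lambda \circ \rho$ really does provide enough flexibility to satisfy every constraint at once.
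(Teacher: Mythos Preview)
Your proposal has a genuine gap: the ``arrangement problem'' you flag as the main technical obstacle is not merely technical but is in fact unsolvable with interval cells. Once you commit to a nested interval partition $\{A_\sigma^n\}$ of $I$ and to $f(x)\in\phi^{-1}(\Lambda\phi(x))$, the level-$2$ cell containing $f^k(x)$ is forced to be $A^2_{\lambda^k(\sigma_{k+2})}$; since $A^2_{(12)}$ and $A^2_{(21)}$ are disjoint intervals, the relative order of two iterates lying in different level-$2$ cells is already determined by which of these two intervals sits on the left, before any finer refinement is consulted. This produces a contradiction. Take $\tau$ with $\tau_3=(132)$: then $x\in A^2_{(12)}$ and $f(x)\in A^2_{\lambda((132))}=A^2_{(21)}$, and $x<f(x)$ forces $A^2_{(12)}$ to lie left of $A^2_{(21)}$. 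Now take $\tau'$ with $\tau'_4=(3421)$: then $x'\in A^2_{(12)}$ (since $\rho^2((3421))=(12)$) and $f^2(x')\in A^2_{\lambda^2((3421))}=A^2_{(21)}$, yet $(3421)$ demands $x'>f^2(x')$, forcing $A^2_{(12)}$ to the right. Both constraints are active whenever $\mu_3((132))>0$ and $\mu_4((3421))>0$, e.g.\ for the sequence of uniform distributions. No placement of children at deeper levels can repair this, because the conflict already lives at level~$2$, where there are only two possible arrangements.

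The paper's construction avoids this by decoupling ``carrying the measure'' from ``controlling the order''. Nested intervals $I_\sigma\subset(\tfrac14,\tfrac34]$ (nested via $\rho$) encode $\mu$, but the iterates $f^k(x)$ are sent into a \emph{separate} family of pairwise disjoint intervals $J_\sigma$ indexed by all of $\bigcup_n S_n$ simultaneously, arranged so that $x_i\in J_{\sigma_i}$ for a compatible sequence forces $\Order(x_1,\dots,x_n)=\sigma_n$ (Lemma~\ref{l:J's}). Because $f^{k}(x)\in J_{\sigma_{k+1}}$ with $\sigma_{k+1}\in S_{k+1}$, distinct iterates land in $J$'s coming from different symmetric groups and never share a cell; the obstruction above disappears. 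The price is that the image of $f$ on $(\tfrac14,\tfrac34]$ is a measure-zero union of Cantor sets, so the remaining half of the Lebesgue mass is filled in by a self-similar rescaling on the outer intervals $(0,\tfrac14]\cup(\tfrac34,1]$.
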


The proof is constructive, a little involved, and unnecessary for the results
that follow.  Therefore we defer the proof to Section \ref{cantorproof}.

\subsection*{Convexity}
Before we end this section we make an observation about convexity.
Suppose $\C\subset\A$ is a collection of functions such that whenever $f,g\in \C$ and $t\in[0,1]$,
the function $$h(x)=\begin{cases}
tf(\frac x t) & \mbox{if } x<t \\
t+(1-t)g(\frac{x-t}{1-t}) & \mbox{if }t<x\leq 1
\end{cases}
$$
is also in $\C$.  Then $\bmu_n(\C)$ is a convex subset of $\D_n$.
This is because $h$ is the ``block sum" of $f$ scaled by $t$ and $g$ scaled by $1-t$, and
so for all $n$, $\bmu_n(h)=t\bmu_n(f)+(1-t)\bmu_n(g)$.

This will usually hold if $\C$ has ``piecewise'' in the title, such as piecewise continuous 
functions, piecewise monotone functions, etc.  It also holds for (aperiodic) measure 
preserving functions.

%


\section{Digraphs}

The next several sections develop the language used in the remainder of the paper.  
We begin with digraphs.

A \emph{digraph} is a quadruple $G=(VG,EG,h,t)$ with $VG$ the vertex set, $EG$ the edge
set, and $h$ and $t$ the head and tail maps from $EG$ to $VG$.  

Recall that $\rho:S_{n+1}\to S_n$ is defined by $\rho(\s)=\Order(\s|_{[n]})$.  Similarly 
define $\rho':S_{n+1}\to S_n$ by $\rho'(\s)=\Order(\s|_{[2,n+1]})$.

\begin{dfn}
For $n\geq 1$ let $G_n$ denote the \emph{permutation digraph} 
$(S_n,S_{n+1},\rho,\rho')$.  The digraphs $G_2$ and $G_3$ are shown
in Figure \ref{fig:digraphs}.
\end{dfn}

\begin{figure}[ht]
  \psfrag{123}{$123$}
  \psfrag{132}{$132$}
  \psfrag{213}{$213$}
  \psfrag{231}{$231$}
  \psfrag{312}{$312$}
  \psfrag{321}{$321$}
  \psfrag{12}{$12$}
  \psfrag{21}{$21$}  
  \centering
  \includegraphics[width=4.5in]{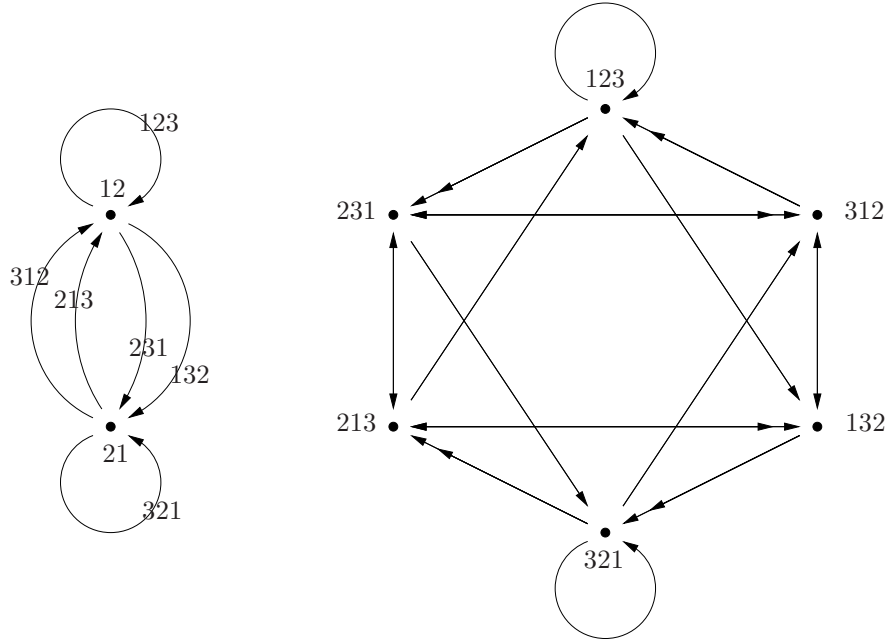}
  \captionsetup{width=.75\textwidth}
  \caption{The digraphs $G_2$ and $G_3$.  The edges of $G_2$ are shown with labels;
  the edges of $G_3$ are abbreviated and the labels omitted.  For instance two directed
  edges go from $231$ to $312$, with labels $2413$ and $3412$.  An edge labeled $4231$
  goes in the reverse direction.}
  \label{fig:digraphs}
\end{figure}

\subsection*{Paths}
A \emph{path} of length
$\ell$ (where $0\leq \ell < \infty$) in a digraph is an alternating sequence 
$p=(v_0,e_1,v_1,e_2,\ldots,v_\ell)$
with $v_i\in VG$ and $e_i\in EG$ such that $h(e_i)=v_{i-1}$ and $t(e_i)=v_i$.  A path
of length $\infty$ is $p=(v_0,e_1,v_1,\ldots)$ such that each finite initial segment ending
with a vertex is a (finite) path.  
We write $\Path_\ell(G)$ for the set of all paths of length $\ell$ in $G$ and $\Path(G)$ for
the set of all paths in $G$.  Note that $S_n=\Path_0(G_n)$.  To define specific paths we 
sometimes abuse notation slightly by thinking of $v_i$ and $e_i$ as functions from 
$\Path_{\geq i}(G)$ to $VG$ and $EG$.

For example, if $p$ is a path of finite length $\ell$ and $q$ is any path with $v_0(q)=v_\ell(p)$ 
then the \emph{concatenation} $pq$ of $p$ and $q$ has $v_i(pq)=v_i(p)$ and $e_i(pq)=e_i(p)$ 
for $i\leq\ell$ and $v_i(pq)=v_{i-\ell}(q)$ and $e_i(pq)=e_{i-\ell}(q)$ for $i>\ell$.

A digraph is \emph{strongly connected} if there are paths connecting every ordered pair
of vertices.  A (finite) path is \emph{embedded} if all $\ell+1$ vertices are distinct, except possibly 
$v_0=v_\ell$.  A \emph{loop} is a finite path with $v_0=v_\ell$.

\subsection*{Projections}
For each $n<\infty$ we define 
$$\pi_n:S_\infty \cup \left(\bigcup_{m\geq n} \Path(G_m)\right) \to \Path(G_n)$$ 
as follows.  First, $\pi_n$ is the identity on $\Path(G_n)$.
if $\s\in \Path_0(G_{n+1})=S_{n+1}$, let $\pi_{n+1,n}(\s)$ be the path
$(\rho(v),v,\rho'(v))\in\Path_1(G_n)$.  If $p=(v_0,e_0,\dots,v_\ell)\in\Path_\ell(G_{n+1})$ 
then let $\pi_{n+1,n}(p)$ be the concatenation 
$\pi_{n+1,n}(v_0)\pi_{n+1,n}(v_1)\cdots\pi_{n+1,n}(v_\ell)$.  (The result is an infinite path
if $\ell=\infty$; otherwise the result is a path of length $\ell+1$.)  Thus 
$\pi_{n+1,n}:\Path(G_{n+1})\to\Path(G_n)$.
Let $\pi_{m,n}=\pi_{n+1,n}\circ\cdots\circ\pi_{m,m-1}:\Path(G_m)\to\Path(G_n)$ and let
$\pi_n$ be the union of the functions $\pi_{m,n}$ on $\bigcup_{m\geq n} \Path(G_m)$.

Finally, extend $\pi_n$ further by defining $\pi_n(\s)$ for $\s=(\s_1,\s_2,\dots)\in S_\infty$
to be the infinite path whose initial subpath of length $\ell$ is equal to $\pi_n(\s_{\ell+n})$.

Note:  if $p\in\Path_\ell(G_m)$ then $\pi_n(p)\in\Path_{\ell+m-n}(G_n)$.

\subsection*{Lifts}
The next lemma says that any path (of length $>0$) on $G_n$ can be lifted to $G_{n+1}$
(where it becomes shorter if its length is finite).  Note however that the edges of the lift 
are not determined;
only the vertices are determined, because the edges of $p$ do not appear in the definition 
of $\pi_{n+1,n}(p)$.  The ambiguity in the lifting process will play an important role later.

\begin{lem}[Path lifting]
The map $\pi_{m,n}$ is surjective.  The image of $\pi_n|_{S_\infty}$ is $\Path_\infty(G_n)$.
\end{lem}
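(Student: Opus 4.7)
The plan is to handle the two claims in turn. Since $\pi_{m,n}$ factors as $\pi_{n+1,n}\circ\cdots\circ\pi_{m,m-1}$, the first claim reduces to surjectivity of each $\pi_{k+1,k}$ onto paths of length at least one. Fix a target $q = (w_0, f_1, w_1, \ldots, f_L, w_L) \in \Path_L(G_n)$ with $L \geq 1$. I would first observe that the vertex sequence of any lift $p = (v_0, e_1, v_1, \ldots, e_{L-1}, v_{L-1}) \in \Path_{L-1}(G_{n+1})$ is uniquely forced: writing $\pi_{n+1,n}(p)$ as the concatenation of the length-one paths $(\rho(v_i), v_i, \rho'(v_i))$ and matching edges with $q$ gives $v_i = f_{i+1}$. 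The three seam conditions $\rho(v_0) = w_0$, $\rho'(v_{i-1}) = \rho(v_i) = w_i$, and $\rho'(v_{L-1}) = w_L$ are then automatic, because each $f_j \in S_{n+1}$ is by hypothesis an edge of $G_n$ from $w_{j-1}$ to $w_j$.

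The main obstacle is producing the edges $e_i \in S_{n+2}$ with $\rho(e_i) = f_i$ and $\rho'(e_i) = f_{i+1}$. I would prove the following sub-lemma: given $\tau, \tau' \in S_{n+1}$ with $\rho'(\tau) = \rho(\tau')$, there is $e \in S_{n+2}$ with $\rho(e) = \tau$ and $\rho'(e) = \tau'$. The construction is direct. Set $\gamma = \rho'(\tau) = \rho(\tau')$ and choose any distinct reals $b_2, \ldots, b_{n+1}$ with order pattern $\gamma$. Insert a real $b_1$ at the rank position among $(b_1, \ldots, b_{n+1})$ prescribed by $\tau(1)$, and a real $b_{n+2}$ at the rank position among $(b_2, \ldots, b_{n+2})$ prescribed by $\tau'(n+1)$. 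Compatibility of $\tau$ and $\tau'$ on their overlap guarantees the two insertions are mutually consistent; if $b_1$ and $b_{n+2}$ compete for the same slot, the tie may be broken either way (this accounts for the one- or two-fold ambiguity foreshadowed before the lemma). Relabeling the $b_j$ by their ranks in $[n+2]$ yields the desired $e$.

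For the infinite claim, given $q = (w_0, f_1, w_1, \ldots) \in \Path_\infty(G_n)$, I would iterate the finite construction along the tower $\cdots \to G_{n+2} \to G_{n+1} \to G_n$: the vertices-forced argument and the sub-lemma apply verbatim to infinite paths, producing lifts $q^{(k)} \in \Path_\infty(G_{n+k})$ with $\pi_{n+k,n}(q^{(k)}) = q$. Setting $\sigma_{n+k} = v_0(q^{(k)})$ for $k \geq 0$ and $\sigma_i = \rho^{n-i}(w_0)$ for $i < n$ defines a candidate $\sigma \in S_\infty$. The compatibility $\rho(\sigma_{m+1}) = \sigma_m$ holds by fiat for $m < n$, and for $m \geq n$ because $\sigma_{m+1}$ is by construction the first edge of $q^{(m-n)}$, whose head is $v_0(q^{(m-n)}) = \sigma_m$. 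Finally, a short induction on $\ell$ shows $\pi_n(\sigma_{\ell+n})$ equals the length-$\ell$ initial subpath of $q$, which gives $\pi_n(\sigma) = q$.
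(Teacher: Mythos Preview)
Your proof is correct and follows essentially the same approach as the paper: reduce to surjectivity of a single step $\pi_{n+1,n}$, observe that the vertices of the lift are forced to be the edges of the target path, and construct the missing edge in $S_{n+2}$ by extending a representative sequence of reals by one slot (the paper phrases this as extending $e:[n+1]\to[n+1]$ to $\overline{e}:[n+2]\to\R$, which is your sub-lemma in slightly terser form). Your treatment of the infinite case, iterating lifts and reading off initial vertices to assemble $\sigma\in S_\infty$, is likewise the paper's argument with the compatibility check spelled out more explicitly.
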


\begin{proof}
For the first part, it suffices to show that $\pi_{n+1,n}$ is surjective, as $\pi_{m,n}$ is a composition
of maps of this form.  If $p\in\Path(G_n)$ then each edge of $p$ is a vertex of $G_{n+1}$.
We only need to show that if $e,e'\in EG_n=VG_{n+1}$ with $t(e)=h(e')$ then there is an edge 
$f\in EG_{n+1}=S_{n+2}$ with $h(f)=e$ and $t(f)=e'$.  Extend the function $e:[n+1]\to[n+1]$ to 
$\overline{e}:[n+2]\to\R$ by defining $e(n+2)$ such that $e'=\Order(e|_{[2,n+2]})$.  Then 
$f=\Order(e)\in S_{n+2}$ is the desired edge.

For the second part, if $p\in\Path_\infty(G_n)$ then set $p=p_0$ and for each $i>0$
let $p_i\in\Path_\infty(G_{n+i})$ be a lift of $p_{i-1}$.  Then for $m\geq n$ let $\s_m$ be 
the initial vertex of $p_{m-n}$, and note that $\s=(\s_1,\s_2,\dots)\in S_\infty\cap(\pi_n)^{-1}(p)$.
\end{proof}

\begin{example}
Consider the infinite path $p\in \Path_\infty(G_3)$ 
that begins at the vertex $(1 2)$ and
traverses the edges $(1 3 2)$ followed by $(3 1 2)$ repeatedly.
Then $p$ projects to the path $q=\pi_2(p)\in \Path_\infty(G_2)$ which traverses the loop 
labeled $(1 2)$ and then the loop labeled $(2 1)$ and then repeats.  
There are infinitely many paths other than $p$ in $\pi_3((\pi_2)^{-1}(q))$, since the vertices must alternate between $(1 2)$ and $(2 1)$ but there are two choices for each edge.  
By contrast, at the next step, $\pi_4((\pi_3)^{-1}(p))$ is the singleton 
consisting of the infinite path on $G_4$ that starts at the vertex $(3 1 2)$ and traverses the 
edges $(1 4 2 3)$ and $(4 1 3 2)$ repeatedly.  
In fact $(\pi_3)^{-1}(p)\subseteq \Sym_\infty$ is already a singleton, being the compatible 
sequence $(\s_1,\s_2,\dots)$ where $\s_n$ is the permutation $(1,n,2,n-1,\dots)$.  
\end{example}

\section{The poset of a path}

Given a path $p=(v_0,e_1,v_1,\ldots,v_\ell)$ on $G_n$, consider the
set 
$$\overline{Q}_p=\left(\{v\}\times [0,\ell]\times [n]\right) \cup \left(\{e\}\times [1,\ell]\times [n+1]\right).$$
This set is (in 1-1 correspondence with) the disjoint union of the domains of all the permutations
$v_i$ and $e_i$.  They are ``patched together" by the equivalence $\sim$ generated by
\begin{enumerate}
\item[(i)]
$(v,a,c)\sim(e,a+1,c)$
\item[(ii)]
$(v,a,c)\sim(e,a,c+1)$.
\end{enumerate}
The equivalence class of $(v,a,c)$ in $Q_p=\overline{Q}_p/\sim$ will be denoted by $x_{a+c}(p),$
or $x_{a+c}$ if the path $p$ is understood; note that
(a) this is well-defined and (b) every element of $Q_p$ is equal to $x_i$ for some 
$1\leq i \leq \ell+n:=m$.  By (a), if $x_i=x_j$ for $1\leq i,j \leq m$ then $i=j$, and so 
$Q_p=\{x_1,\dots,x_m\}$.

The set $Q_p$ is easy to visualize, but let us first define a partial ordering on it.
  
Consider the relation $\leq$ on $Q_p$ generated by 
\begin{enumerate}
\item[(iii)]
$[(v,a,c)]\leq [(v,a,d)]$ if $v_a(c)\leq v_a(d)$
\item[(iv)]
$[(e,a,c)]\leq [(e,a,d)]$ if $e_a(c)\leq e_a(d)$
\end{enumerate}
and extended by transitivity.

We will show in a moment that $\leq$ is a partial ordering on $Q_p$.
The point of $Q_p$ is to keep track of all order relationships which 
necessarily hold among $\s(1),\ldots,\s(m)$, if $\s$ is a permutation in $\pi_n^{-1}(p)$.

\begin{example}
Consider the path $p$ of length 5 in $G_3$ with edges
$e_1=(2134), e_2=(1342), e_3=(2314), e_4=(3241), e_5=(2314)$.  
This is a loop based at $(213)$.  Attempts to construct real numbers $z_1,\dots,z_8$ 
such that $\Order(z_i,z_{i+1},z_{i+2},z_{i+3})=e_i$ quickly lead one to draw pictures
like Figure \ref{fig:worms}.  The top picture is a plot of the desired $y$'s.  Note that 
$y_5$ could be perturbed to be larger or smaller than $y_1$, and similarly for
$y_7$ and $y_2$.  The dotted lines indicate the duration of the influence of $y_i$ on 
future $y_j$'s.  This information is abstracted in the middle picture, in which the dots 
are the elements of $\overline{Q_p}$ and equivalent elements are joined by an arc.  
Each arc is an element of $Q_p$.  The bottom picture shows the partial ordering:
an edge pointing from $x_i$ to $x_j$ indicates that $x_i\leq x_j$.
\end{example}

\begin{figure}[ht]
  \psfrag{x}{$z_i$}
  \psfrag{1}{$1$}
  \psfrag{2}{$2$}
  \psfrag{3}{$3$}
  \psfrag{4}{$4$}
  \psfrag{5}{$5$}
  \psfrag{6}{$6$}
  \psfrag{7}{$7$}
  \psfrag{8}{$8$}
  \psfrag{i}{$i$}
  \psfrag{y}{$(2 1 3 4)$}
  \psfrag{v0}{$v_0$}
  \psfrag{v1}{$v_1$}
  \psfrag{v2}{$v_2$}
  \psfrag{v3}{$v_3$}
  \psfrag{v4}{$v_4$}
  \psfrag{v5}{$v_5$}
  \psfrag{e1}{$e_1$}
  \psfrag{e2}{$e_2$}
  \psfrag{e3}{$e_3$}
  \psfrag{e4}{$e_4$}
  \psfrag{e5}{$e_5$}
  \psfrag{x1}{$x_1$}
  \psfrag{x2}{$x_2$}
  \psfrag{x3}{$x_3$}
  \psfrag{x4}{$x_4$}
  \psfrag{x5}{$x_5$}
  \psfrag{x6}{$x_6$}
  \psfrag{x7}{$x_7$}
  \psfrag{x8}{$x_8$}
      \centering
  \includegraphics[scale=.46]{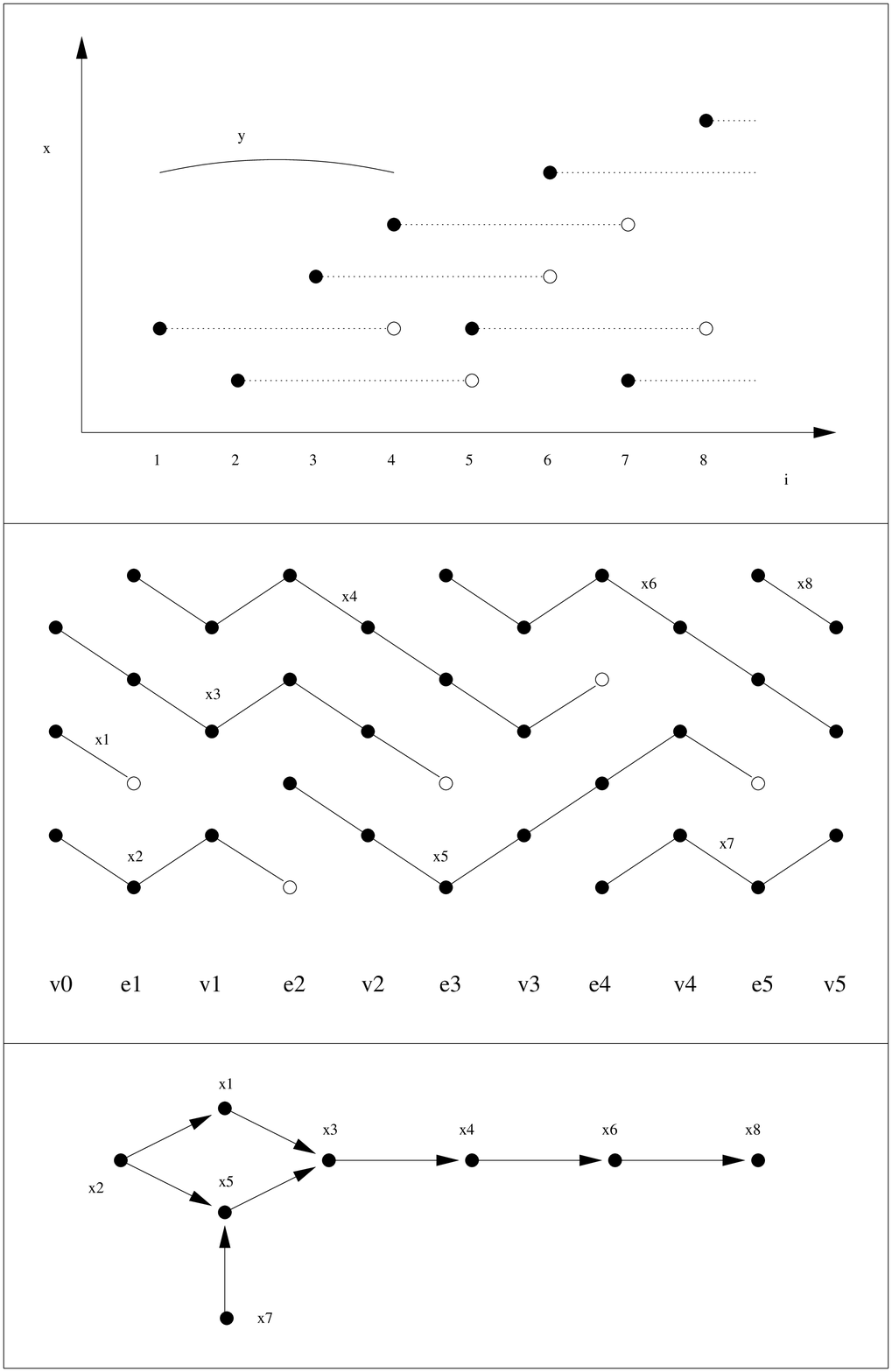}
    \captionsetup{width=\textwidth}
  \caption{The path $p$ is a loop of length 5 in $G_3$ traversing in this order the edges
  $2134, 1342, 2314, 3241, 2314$.  Figure (a) is a plot of a sequence $z_i$ (for $i=1,\dots,8$)
  which maps to $p$ under $\pi_3$.  The elements of $\overline Q_p$, shown in (b) as
  dots, are the intersections of the above plot with vertical lines at 
  $i=3,3.5,4,\dots,8$.  The elements $(v,0,3)$, $(v,0,1)$, $(v,0,2)$ of $\overline Q_p$ make
  up the leftmost column of dots (read from top to bottom).  Equivalent elements are joined 
  by an arc.  Figure (c) depicts the poset $Q_p$ whose elements are the arcs in either of 
  the previous pictures.}
  \label{fig:worms}
\end{figure}

\begin{lem}\label{liftleq}
Let $p$ be a path on $G_n$, and let $\tilde p\in\pi_n^{-1}(p)$.
If $x_i(p)\leq x_j(p)$ then $x_i(\tilde p)\leq x_j(\tilde p)$.
\end{lem}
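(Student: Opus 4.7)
My plan is to reduce the lemma to checking that every generating relation of $\leq$ on $Q_p$ (i.e.\ each instance of (iii) or (iv)) is also a relation on $Q_{\tilde p}$, after identifying the elements of the two posets via their common indexing $x_1,\dots,x_m$. Transitivity then extends this to the full partial order, giving the lemma. The matching of indices is automatic: since $\tilde p$ has length $\tilde\ell = \ell-1$ on $G_{n+1}$, we have $\tilde\ell + (n+1) = \ell + n = m$, so the labels $x_k$ make sense in both posets.

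The crucial structural observation is that the definition of $\pi_{n+1,n}$ forces the vertices of $\tilde p$ to be the edges of $p$: writing $\tilde p = (\tilde v_0, \tilde e_1, \dots, \tilde v_{\ell-1})$, we have $\tilde v_a = e_{a+1}$ for all $a\in[0,\ell-1]$. Consequently $v_a = \rho(\tilde v_a)$ for $a \leq \ell-1$ and $v_\ell = \rho'(\tilde v_{\ell-1})$. This is the only property of the lift I expect to need; in particular the edges $\tilde e_a$ play no role in the argument, which reflects the fact that a lift is pinned down only at the vertex level.

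Now I verify the two generating relations. For relation (iv) of $Q_p$, an inequality $e_a(c) \leq e_a(d)$ with $c,d \in [n+1]$, $a\in[1,\ell]$, becomes, after substituting $e_a = \tilde v_{a-1}$, exactly the hypothesis of relation (iii) in $Q_{\tilde p}$, yielding $[(\tilde v, a-1, c)] \leq [(\tilde v, a-1, d)]$ in $Q_{\tilde p}$, with matching indices $x_{a+c-1}, x_{a+d-1}$. For relation (iii) of $Q_p$, the inequality $v_a(c) \leq v_a(d)$ with $c,d\in[n]$, $a\in[0,\ell]$, splits into two cases. For $a \leq \ell-1$, using $v_a = \rho(\tilde v_a)$ and the fact that $\rho$ acts by restriction of domain, $v_a(c)\leq v_a(d)$ is equivalent to $\tilde v_a(c) \leq \tilde v_a(d)$, again an instance of relation (iii) in $Q_{\tilde p}$ with matching indices. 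For $a=\ell$, the identification $v_\ell = \rho'(\tilde v_{\ell-1})$ gives $v_\ell(c) \leq v_\ell(d)$ iff $\tilde v_{\ell-1}(c+1) \leq \tilde v_{\ell-1}(d+1)$; the elements $[(v,\ell,c)]$ and $[(v,\ell,d)]$ can be re-represented (using equivalence (ii)) as $[(e,\ell,c+1)]$ and $[(e,\ell,d+1)]$, and in $Q_{\tilde p}$ they correspond to $[(\tilde v,\ell-1,c+1)]$ and $[(\tilde v,\ell-1,d+1)]$, whose indices are $\ell+c$ and $\ell+d$, as required.

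I do not anticipate any serious obstacle; the work is essentially bookkeeping. The main thing to get right is the handling of the endpoints $a=0$ and $a=\ell$ of relation (iii), where one must use the equivalences (i)--(ii) to pick the appropriate representative in $Q_{\tilde p}$ (the one built from a vertex of $\tilde p$ rather than an edge), so that the projection/restriction $\rho$ or $\rho'$ lines up correctly with the index arithmetic.
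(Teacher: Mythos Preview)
Your proposal is correct and follows essentially the same approach as the paper's proof. Both arguments rest on the correspondence $\tilde v_{a-1}=e_a$ between vertices of the lift and edges of $p$, together with the fact that $\rho$ and $\rho'$ preserve the order of pairs in their domain; the paper first normalizes the chain in $\overline Q_p$ so that every comparison uses only $e$-triples and then relabels wholesale, whereas you check the generating relations (iii) and (iv) one at a time, but the content is identical. One small point worth stating explicitly (the paper does): it suffices to treat $\tilde p\in\Path(G_{n+1})$, since a general lift is obtained by iterating one-step lifts.
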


\begin{proof}
We may assume the length $\ell$ is not zero.  It suffices to prove for 
$\tilde p\in \Path(G_{n+1})$, as we can lift multiple times.

The hypothesis $x_i(p)\leq x_j(p)$ implies there is a sequence
\begin{equation}\label{Q_p-sequence}
x_i\ni(y_0,a_0,c_0)\sim(y_1,a_1,c_1)\leq(y_2,a_2,c_2)\sim\cdots\leq(y_r,a_r,c_r)\in x_j
\end{equation}
in $\overline{Q}_p$, where each step is one of the types (i)-(iv).  In this sequence, 
if one of the inequalities has $y_i=y_{i+1}=v$ then
$a_i=a_{i+1}$ and using (i) and (ii) we can replace $y_i$ and $y_{i+1}$ by $e$ and either
increase $a_i$ and $a_{i+1}$ by 1 (if $a_i\ne\ell$) or increase $c_i$ and $c_{i+1}$ by 1
(if $a_i\ne 0)$.  Since $\rho(e_i)=v_{i-1}$ and $\rho'(e_i)=v_i$, the inequality is preserved
in either case.  Thus we obtain a new sequence \eqref{Q_p-sequence} with each $y_i=e$.

Now note that the elements $(e,a,c)$ of $\overline Q_p$ are in one-one correspondence
with the elements $(v,a,c)$ of $\overline Q_{\tilde p}$.  Thus if we now switch every $e$
to a $v$ and subtract 1 from each $a$, we obtain a sequence in $\overline Q_{\tilde p}$ 
showing $x_i(\tilde p)\leq x_j(\tilde p)$.
\end{proof}

\begin{cor}
The relation $\leq$ is a partial order on $Q_p$.
\end{cor}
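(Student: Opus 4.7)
The plan is to reduce antisymmetry to the trivial case of a single vertex, using the path lifting lemma. Reflexivity and transitivity come essentially for free: reflexivity holds because rule (iii) with $c=d$ gives $[(v,a,c)]\leq[(v,a,c)]$, and transitivity is built into the definition since $\leq$ is generated by (iii)--(iv) and then extended by transitivity. So the only content of the corollary is antisymmetry.

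For antisymmetry, the key observation is that if $p$ is a single vertex $\sigma\in S_m$ (i.e.\ a path of length $0$ in $G_m$), then $Q_p$ is canonically identified with $[m]$, and the order relation on $Q_p$ is generated solely by rule (iii) applied to the single vertex $\sigma$. This gives a total order: $x_i\leq x_j$ iff $\sigma(i)\leq\sigma(j)$. In particular antisymmetry on $Q_p$ is immediate in this case because $\sigma$ is a bijection.

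To reduce the general case to this base case, start with $p\in\Path_\ell(G_n)$ and use the path lifting lemma to lift $p$ a total of $\ell$ times, obtaining $\tilde p\in\Path_0(G_{n+\ell})$ with $\pi_{n+\ell,n}(\tilde p)=p$; such a lift exists by the surjectivity statement in the lifting lemma. Now suppose $x_i(p)\leq x_j(p)$ and $x_j(p)\leq x_i(p)$. Applying Lemma \ref{liftleq} iteratively (once per lifting step) yields $x_i(\tilde p)\leq x_j(\tilde p)$ and $x_j(\tilde p)\leq x_i(\tilde p)$ in $Q_{\tilde p}$. Since $Q_{\tilde p}$ carries a total order coming from a permutation, this forces $i=j$.

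There is no real obstacle here, since all the substantive work has already been done in proving the path lifting lemma and Lemma \ref{liftleq}; the corollary is essentially a packaging statement. The only mild point to check is that the rules (i)--(iv) governing $\leq$ on $Q_{\tilde p}$ really do degenerate to a total order when $\ell=0$, which is clear because rules (ii) and (iv) become vacuous (there are no edges in $\tilde p$) and rule (iii) applies only to the single vertex $\sigma$.
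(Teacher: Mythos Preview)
Your proof is correct and follows essentially the same approach as the paper: reduce antisymmetry to the length-zero case by lifting $p$ to a single permutation $\sigma\in S_m$ via the path lifting lemma, then invoke Lemma~\ref{liftleq} to transfer the inequalities upward, where the order on $Q_\sigma$ is just the total order induced by $\sigma$. The only cosmetic difference is that the paper applies Lemma~\ref{liftleq} once (its statement already allows $\tilde p\in\pi_n^{-1}(p)$ at any level), whereas you phrase it as an iterated single-step lift; either way the argument is the same.
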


\begin{proof}
The relation is reflexive and transitive by definition.  We must show that if $x_i\leq x_j$
and $x_j\leq x_i$ then $i=j$.  Lift $p$ to a path $v_0=\s\in\Path_0(G_m)=S_m$ (where $m=\ell+n$).
There are no equivalences in $\overline{Q}_\s$, and $x_i\leq x_j$ in the poset $Q_\s$ if and only
if $\s(i)\leq\s(j)$.  By Lemma \ref{liftleq}, $x_i\leq x_j$ and $x_j\leq x_i$ in $Q_\s$, so $i=j$.
\end{proof}

\begin{rem}\label{totalorder}
Note that for any $1\leq i \leq \ell$, the elements $x_i,x_{i+1},\dots,x_{i+n}$ are totally ordered in $Q_p$.
\end{rem}

Let $\psi(v)=0$ and $\psi(e)=-1/2$.  If $x_i\leq x_j$ in $Q_p$ then there is a sequence
\eqref{Q_p-sequence} with each $(y_k,a_k,c_k)\in\overline{Q}_p$.  Call such a 
sequence \emph{monotonic} if
the function $\psi(y_k)+a_k$ is monotonic in $k$.

\begin{lem}\label{monotonic}
If $x_i\leq x_j$ in $Q_p$ then there is a monotonic sequence of the form 
\eqref{Q_p-sequence}.  
\end{lem}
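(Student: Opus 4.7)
The plan is to interpret $\psi(y_k)+a_k$ as a ``time coordinate'' of the representative in $\overline{Q}_p$---elements $(v,a,\cdot)$ sit at integer time $a$ and $(e,a,\cdot)$ at time $a-\tfrac12$---and to inductively smooth non-monotonic sequences by local surgery on \eqref{Q_p-sequence}.

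First, I would observe that each $\leq$ step in \eqref{Q_p-sequence} takes place inside a single vertex or edge of $p$, so it preserves the time coordinate: $t_{2k-1}=t_{2k}$ where $t_k := \psi(y_k)+a_k$. Consequently the displayed time sequence is monotonic precisely when the ``container times'' $t_0, t_1, t_3, \ldots, t_{r-1}, t_r$ form a monotonic sequence. If this fails, then three consecutive container times bracket a strict peak (valleys are handled symmetrically). After refining the $\sim$ steps into elementary equivalences if necessary, each $\sim$ in \eqref{Q_p-sequence} changes the time by exactly $\pm\tfrac12$.

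The core step is to eliminate a peak at positions $2m+1, 2m+2$, where $t_{2m-1}=t_{2m}<t_{2m+1}=t_{2m+2}>t_{2m+3}=t_{2m+4}$, by collapsing the block $z_{2m-1},\ldots,z_{2m+4}$ into a single $\leq$ step from $z_{2m-1}$ to $z_{2m+4}$ in a common vertex or edge. Two sub-cases arise depending on whether the peak height is integer (the peak lies in some $v_a$) or half-integer (in some $e_{a+1}$). In the integer case, unwinding the elementary equivalences (i) and (ii) shows that $z_{2m-1}, z_{2m}, z_{2m+3}, z_{2m+4}$ all lie in $e_a$; the identity $\rho'(e_a)=v_a$ converts the peak comparison $z_{2m+1}\leq z_{2m+2}$ in $v_a$ into a comparison $z_{2m}\leq z_{2m+3}$ in $e_a$, and transitivity of $\leq$ on $e_a$ then yields $z_{2m-1}\leq z_{2m+4}$. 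The half-integer case is dual: the common container is $v_a$ and the translation uses $\rho(e_{a+1})=v_a$. Boundary peaks are treated by the same argument after inserting a trivial $\sim$ step at the affected end.

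Each surgery strictly reduces the length of the sequence by four, so the process terminates in a monotonic sequence. The main obstacle is the case analysis of the local surgery: one must carefully track whether the peak sits at a vertex or an edge and invoke the appropriate restriction map ($\rho'$ in the former case, $\rho$ in the latter) to translate the peak comparison outward into the common container; the remaining bookkeeping---restoring the alternation by combining consecutive $\leq$ steps via transitivity within $e_a$ or $v_a$, and handling boundary peaks---is routine.
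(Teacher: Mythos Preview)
Your proposal is correct and is essentially the paper's proof. The paper likewise picks an innermost backtrack and reduces to the two cases $(v,a,c)\sim(e,a+1,c)\leq(e,a+1,d)\sim(v,a,d)$ and $(e,a,c+1)\sim(v,a,c)\leq(v,a,d)\sim(e,a,d+1)$, then uses $v_a=\rho(e_{a+1})$ and $v_a=\rho'(e_a)$ respectively to push the comparison down one time-level; your integer/half-integer ``peak'' dichotomy is exactly this pair of cases, and your extra step of absorbing $z_{2m-1}$ and $z_{2m+4}$ via transitivity in the common container is just the bookkeeping needed to restore the $\sim/\leq$ alternation that the paper leaves implicit.
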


\begin{proof}
Choose a sequence of the form \eqref{Q_p-sequence}; one exists by definition.  Note that
only rules (1) and (2) change $\psi$, and that $\psi$ increases (by $1/2$) if either
of these rules is applied by replacing the left side with the right.  Suppose the given 
sequence is not monotonic.  Specifically suppose that $\psi$ increases and later decreases;
the other case is virtually identical.  Choosing an innermost such backtrack,
we find a subsequence of one of the following two forms:
\begin{enumerate}
\item[(i)] 
$(v,a,c)\sim(e,a+1,c)\leq(e,a+1,d)\sim(v,a,d)$
\item[(ii)]
$(e,a,c+1)\sim(v,a,c)\leq(v,a,d)\sim(e,a,d+1)$
\end{enumerate}

In case (i), we have $e_{a+1}(c)\leq e_{a+1}(d)$.  But $v_a=\rho(e_{a+1})$ so we
also have $v_a(c)\leq v_a(d)$.  Thus we can delete the middle two terms of (i) and 
eliminate the backtracking.

Case (ii) is similar:  $v_a(c)\leq v_a(d)$ but this time $v_a=\rho'(e_a)$.  Now it follows
that $e_a(c+1)\leq e_a(d+1)$ so again we can eliminate the backtracking.
\end{proof}

Referring again to Figure \ref{fig:worms}, Lemma \ref{monotonic} says that it is very
easy to determine whether $x_i\leq x_j$.  If $i<j$, one just sees whether it is possible to 
connect the right endpoint of $x_i$ to any point above $x_j$ with a path that
passes the vertical line test.  If not, then $x_i\leq x_j$.

\begin{lem}\label{concat}
Let $p,q$ be paths on $G_n$ of lengths $\ell,\ell'$ such that the concatenation
$pq$ is defined.  For $1\leq i,j \leq \ell'+n$, if $x_i(q)\leq x_j(q)$ then 
$x_{\ell+i}(pq)\leq x_{\ell+j}(pq)$.
\end{lem}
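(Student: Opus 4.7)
The plan is to show that any chain in $\overline{Q}_q$ witnessing $x_i(q) \leq x_j(q)$ can be translated verbatim into $\overline{Q}_{pq}$ by shifting the time coordinate by $\ell$. The key observation is that, in the concatenation $pq$, the vertices and edges agree with those of $q$ after an index shift: for $0 \leq a \leq \ell'$ we have $v_{\ell+a}(pq)=v_a(q)$, and for $1 \leq a \leq \ell'$ we have $e_{\ell+a}(pq)=e_a(q)$. Hence the rules (i)--(iv) defining the partial order refer only to local data which survives the shift.

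Concretely, define a map $\Phi:\overline{Q}_q \to \overline{Q}_{pq}$ by $\Phi(v,a,c)=(v,a+\ell,c)$ and $\Phi(e,a,c)=(e,a+\ell,c)$. First I would verify that $\Phi$ is well-defined: the allowed ranges are $0\leq a \leq \ell'$ (for $v$) and $1\leq a \leq \ell'$ (for $e$), which shift to $\ell\leq a+\ell\leq \ell+\ell'$ and $\ell+1\leq a+\ell\leq \ell+\ell'$, both valid in $\overline{Q}_{pq}$. Next I would check that $\Phi$ descends to $Q_p$ in the right way: $\Phi(v,a,c)$ represents $x_{a+c+\ell}(pq)$ while $(v,a,c)$ represents $x_{a+c}(q)$, so $\Phi$ implements the desired index shift $x_i(q)\mapsto x_{\ell+i}(pq)$.

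The main step is to observe that each of the four generating rules is preserved by $\Phi$. Rules (i) and (ii) hold because the equivalence they impose is purely syntactic in $(a,c)$, shifted identically on both sides by $\ell$. Rules (iii) and (iv) hold because $v_{a+\ell}(pq)=v_a(q)$ and $e_{a+\ell}(pq)=e_a(q)$, so the numerical comparison $v_a(c)\leq v_a(d)$ (resp.\ $e_a(c)\leq e_a(d)$) in $q$ translates into the same comparison in $pq$. Therefore, starting from any sequence of the form \eqref{Q_p-sequence} in $\overline{Q}_q$ witnessing $x_i(q)\leq x_j(q)$, its image under $\Phi$ is a valid sequence in $\overline{Q}_{pq}$ witnessing $x_{\ell+i}(pq)\leq x_{\ell+j}(pq)$.

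There is no real obstacle here; the lemma is essentially a functoriality statement saying that the poset construction commutes with taking a suffix. The only thing one has to be careful about is the boundary of the suffix, namely the first column $a=0$ in $\overline{Q}_q$: under $\Phi$ it lands at $a=\ell$ in $\overline{Q}_{pq}$, which is an \emph{interior} column, but this is harmless because all the equivalences and comparisons used in the chain are still legal there. Once the shift has been shown to respect (i)--(iv), the conclusion is immediate.
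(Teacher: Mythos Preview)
Your proposal is correct and is essentially the paper's argument: both shift the second coordinate of every term in a witnessing sequence \eqref{Q_p-sequence} by $\ell$ and observe that rules (i)--(iv) are preserved because $v_{a+\ell}(pq)=v_a(q)$ and $e_{a+\ell}(pq)=e_a(q)$. The only cosmetic difference is that the paper first invokes Lemma~\ref{monotonic} to take the witnessing sequence to be monotone, whereas you correctly note that monotonicity is not actually needed for the shift to work.
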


\begin{proof}
Choose a monotone sequence of the form \eqref{Q_p-sequence}, and add
$\ell$ to the second coordinate of each term.  The new sequence proves the
result.
\end{proof}

\begin{rem}
Let $p$ be a path of length $\ell$ on $G_n$, and let $m=n+\ell$.
A choice of lift $\s\in\pi_{m,n}^{-1}(p)$ amounts to a choice of extension of $\leq$ to a total order on 
$\{x_1,\dots,x_m\}$.  That this can be done is well-known; the process is sometimes
called a ``topological sort."  In particular, for a subset $\{i_1,\dots,i_k\}\subset[m]$ of indices,
if the $x_{i_j}$ are pairwise incomparable in $Q_p$ then for any permutation $\nu\in S_k$
there is an extension of $\leq$ to a total order on $\{x_1,\dots,x_m\}$ satisfying
$x_{i_{\nu(1)}}<\cdots<x_{i_{\nu(k)}}$.  In the terminology of lifts this becomes the following
statement, which bears on the discussion of entropy in a later section.
\end{rem}

\begin{cor}\label{factorialgrowth}
Let $p$ be a path of length $\ell$ on $G_n$, and let $m=\ell+n$.  If the elements 
$\{x_{i_1},\dots,x_{i_k}\}$ 
of $Q_p$ are pairwise incomparable, then for any permutation $\nu\in S_k$ there is 
a lift $\s\in S_m\subset\pi_n^{-1}(p)$ such that $\s(i_{\nu(1)})<\cdots<\s(i_{\nu(k)}).$
In particular $|\pi_n^{-1}(p)\cap S_m | \geq k!.$
\end{cor}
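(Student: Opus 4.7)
The plan is to use the correspondence, sketched in the preceding remark, between lifts $\s \in \pi_{m,n}^{-1}(p) \cap S_m$ and linear extensions of the partial order $\leq$ on $Q_p$. One direction is already supplied by Lemma~\ref{liftleq}: any lift $\s \in S_m$ induces a total order $x_i < x_j \iff \s(i) < \s(j)$ on $\{x_1, \dots, x_m\}$ that extends $\leq$. For the reverse direction I would take a linear extension of $\leq$ and define $\s \in S_m$ by setting $\s(i)$ to be the rank of $x_i$. The remaining task is to verify that $\pi_{m,n}(\s) = p$, which unwinds to checking that $\Order(\s(a+1), \dots, \s(a+n)) = v_a$ and $\Order(\s(a), \dots, \s(a+n)) = e_a$ for each $a$; both follow directly from rules (iii)--(iv) of the definition of $\leq$ together with the identifications (i)--(ii) building $\overline{Q}_p$.

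With the bijection in hand, the corollary reduces to a standard topological-sort argument. Given pairwise incomparable elements $x_{i_1}, \dots, x_{i_k}$ and a target $\nu \in S_k$, I would enlarge $\leq$ by adjoining the chain $x_{i_{\nu(1)}} < \dots < x_{i_{\nu(k)}}$; since the $x_{i_j}$ are pairwise incomparable in $Q_p$, no cycle is introduced and the result is still a partial order. Any linear extension of this enlarged order corresponds under the bijection to a lift $\s \in S_m$ satisfying $\s(i_{\nu(1)}) < \dots < \s(i_{\nu(k)})$. Distinct $\nu \in S_k$ produce lifts that differ on $\{i_1, \dots, i_k\}$, so these $k!$ lifts are distinct, yielding $|\pi_n^{-1}(p) \cap S_m| \geq k!$.

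The only substantive step is establishing the reverse half of the bijection; the combinatorial content afterward (existence of linear extensions, freedom on antichains) is classical. I do not foresee a serious obstacle beyond the bookkeeping check against the definitions of $\overline{Q}_p$ and $\leq$, and in particular the fact that the identifications (i)--(ii) are exactly what make the projection-consistency conditions for $v_a$ and $e_a$ equivalent statements about the same total order on $Q_p$.
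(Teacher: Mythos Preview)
Your proposal is correct and matches the paper's approach exactly: the paper does not give a separate proof of this corollary but instead derives it from the preceding remark, which asserts precisely the bijection between lifts $\s\in\pi_{m,n}^{-1}(p)$ and linear extensions of $(Q_p,\leq)$ and then invokes the standard topological-sort fact about freedom on antichains. Your write-up is in fact more detailed than the paper's, since you spell out both directions of the bijection and the verification that the resulting $\s$ projects back to $p$.
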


As a special case of this we also note the following.

\begin{cor}\label{driftlesscoord}
Let $p$ be a path of length $\ell$ on $G_n$.  Then $x_i\leq x_j$ if and only if 
$\s(i)\leq\s(j)$ for every $\s\in S_m\cap\pi_n^{-1}(p)$ (where necessarily $m=\ell+n)$.
\end{cor}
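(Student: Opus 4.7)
The plan is to prove the biconditional by chaining the previous results. The forward direction follows from Lemma \ref{liftleq} applied to lifts living in $S_m$. The backward direction follows by contraposition using Corollary \ref{factorialgrowth} with $k=2$.

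First I would handle the forward direction. Suppose $x_i \leq x_j$ in $Q_p$ and let $\sigma \in S_m \cap \pi_n^{-1}(p)$ be any lift; we view $\sigma$ as the length-$0$ path consisting of the single vertex $\sigma \in \Path_0(G_m)$. By Lemma \ref{liftleq} (applied iteratively up the tower $G_n \to G_{n+1} \to \cdots \to G_m$), we obtain $x_i(\sigma) \leq x_j(\sigma)$ in $Q_\sigma$. Since $\sigma$ has length zero, $\overline{Q}_\sigma$ carries no nontrivial equivalences, so $Q_\sigma = \{x_1,\dots,x_m\}$ and the partial order on $Q_\sigma$ is generated by rule (iii) alone, which reads $x_i \leq x_j \iff \sigma(i) \leq \sigma(j)$. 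Thus $\sigma(i) \leq \sigma(j)$ for every lift $\sigma$.

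For the reverse direction I would argue the contrapositive: if $x_i \not\leq x_j$ in $Q_p$, I will produce some lift $\sigma \in S_m \cap \pi_n^{-1}(p)$ with $\sigma(i) > \sigma(j)$. There are two sub-cases. If $x_j \leq x_i$ strictly (so $i \neq j$), then by the forward direction just proved, every lift $\sigma$ satisfies $\sigma(j) \leq \sigma(i)$, and since $\sigma$ is a bijection and $i \neq j$ this inequality is strict; any lift works. If instead $x_i$ and $x_j$ are incomparable in $Q_p$, apply Corollary \ref{factorialgrowth} with $k=2$, indices $\{i,j\}$, and the transposition $\nu \in S_2$ ordering $j$ before $i$: this yields a lift $\sigma \in S_m \cap \pi_n^{-1}(p)$ with $\sigma(j) < \sigma(i)$, as desired.

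Since these two sub-cases exhaust the possibilities whenever $x_i \not\leq x_j$, the contrapositive is established and the corollary follows. No step here looks hard; the only minor wrinkle is being careful that the forward direction of Lemma \ref{liftleq} really does apply to $\sigma$ regarded as a length-$0$ path in $G_m$, so that the induced poset $Q_\sigma$ coincides with the total order given directly by the values of $\sigma$.
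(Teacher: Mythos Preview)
Your proof is correct and essentially matches the paper's approach: the paper presents this corollary as a ``special case'' of the preceding remark and Corollary~\ref{factorialgrowth}, with the forward direction implicit in the identification of lifts with linear extensions of $\leq$ (equivalently Lemma~\ref{liftleq}) and the backward direction coming from the $k=2$ case of Corollary~\ref{factorialgrowth}. Your write-up simply makes both halves explicit, including the case split in the contrapositive.
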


\section{Drift}
If $\g$ is a loop of length $\ell<\infty$ on $G_n$, then the elements $x_1,\dots,x_n$ of $Q_p$
are totally ordered, as are the elements $x_{\ell+1},\dots,x_{\ell+n}$, and
if we set $y_i=x_{\ell+i}$, then we have $x_i\leq x_j$ if and only if $y_i\leq y_j$, for all $i,j\in[n]$.
The notion of drift is based on how the $x_i$ compare to the $y_j$, as
measured by the following two functions.
Let $\langle n\rangle$ be the totally ordered set $[n]\cup \{-\infty,\infty\}$ 
(with $-\infty<1$ and $n<\infty$), and for $i\in \langle n\rangle $ define
\begin{align*}
\Max_\g(i)&=
\begin{cases}
j &\mbox{if }x_i\leq y_j, \mbox{ and for } k\in[n], x_i\leq y_k \mbox{ implies } 
y_j\leq y_k \\
\infty&\mbox{if }i=\infty \mbox{ or }i\in[n] \mbox{ and there is no $j$ such that }x_i\leq y_j \\
-\infty&\mbox{if }i=-\infty
\end{cases}
\\
\Min_\g(i)&=
\begin{cases}
j &\mbox{if }x_i\geq y_j, \mbox{ and for } k\in[n], x_i\geq y_k \mbox{ implies } 
y_j\geq y_k\\ 
-\infty&\mbox{if }i=-\infty \mbox{ or }i\in[n] \mbox{ and there is no $j$ such that }x_i\geq y_j \\
\infty&\mbox{if }i=\infty
\end{cases}
\end{align*}

\begin{lem}\label{maxminordpres}
If $x_i\leq x_j$ then $\Max_\g(i)\leq\Max_\g(j)$ and $\Min_\g(i)\leq\Min_\g(j)$.
\end{lem}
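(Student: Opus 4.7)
The plan is to reduce both inequalities to a short transitivity argument by rewriting the defining conditions of $\Max_\g$ and $\Min_\g$ in terms of two explicit subsets of $[n]$. For $i\in[n]$ with $\Max_\g(i)\neq\infty$, the definition says precisely that $y_{\Max_\g(i)}$ is the $\leq$-minimum of
\[U(i)=\{k\in[n]:x_i\leq y_k\},\]
and similarly, when $\Min_\g(i)\neq-\infty$, the element $y_{\Min_\g(i)}$ is the $\leq$-maximum of
\[L(i)=\{k\in[n]:y_k\leq x_i\}.\]
Both the minimum and maximum are well-defined because $y_1,\ldots,y_n$ inherit a total order from $Q_\g$, a consequence of $v_\ell=v_0$ and the partial order rules (iii)--(iv).

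The main observation is the monotonic behavior of these sets under the hypothesis: if $x_i\leq x_j$ in $Q_\g$, then transitivity of $\leq$ gives $U(j)\subseteq U(i)$ and $L(i)\subseteq L(j)$. From here the $\Max$ statement splits into three easy cases. If $\Max_\g(j)=\infty$ there is nothing to check. If $\Max_\g(i)=\infty$, then $U(i)=\emptyset$, and the inclusion $U(j)\subseteq U(i)$ forces $U(j)=\emptyset$, so $\Max_\g(j)=\infty$ as well. Otherwise both values lie in $[n]$: the witness $y_{\Max_\g(j)}$ belongs to $U(j)\subseteq U(i)$, and is therefore at least as large (in the total order on the $y_k$) as the minimum $y_{\Max_\g(i)}$ of $U(i)$, which is exactly what the lemma asserts. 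The boundary cases $i=-\infty$ and $j=\infty$ are immediate from the definitions. The $\Min$ statement is entirely dual: from $L(i)\subseteq L(j)$ one argues with maxima in place of minima, so that $y_{\Min_\g(i)}\leq y_{\Min_\g(j)}$.

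No step presents a genuine obstacle; the heart of the proof is transitivity of $\leq$ together with the elementary fact that enlarging the set over which one takes a minimum (respectively maximum) can only decrease (respectively increase) the result. The only bookkeeping is tracking the $\pm\infty$ cases and noting that the comparison on $\langle n\rangle$ is read off from the total order on $\{y_1,\ldots,y_n\}$ that underlies both definitions.
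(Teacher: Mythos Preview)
Your proof is correct and follows the paper's approach: the paper's proof simply says the claim is ``immediate from the definitions, and from the fact that $x_i\leq x_j$ if and only if $y_i\leq y_j$,'' and your argument via the sets $U(i)$ and $L(i)$ makes this transitivity step explicit. You are also right to read the conclusion $\Max_\g(i)\leq\Max_\g(j)$ as a comparison via the total order on $\{y_1,\ldots,y_n\}$ (equivalently on $\{x_1,\ldots,x_n\}$) rather than the raw integer order on the indices---this is how the lemma is used later in the paper.
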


\begin{proof}
This is immediate from the definitions, and from the fact that $x_i\leq x_j$ if and only
if $y_i\leq y_j$.
\end{proof}

\begin{lem}\label{funct}
Suppose $p$ and $q$ are finite paths such that $pq$ is a path.  Then 
$\Max_{pq}=\Max_q\Max_p$ and $\Min_{pq}=\Min_q\Min_p$.
\end{lem}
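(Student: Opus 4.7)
The plan is to prove the $\Max$ statement; the $\Min$ argument is dual, with all inequalities reversed and the monotone witness taken non-increasing. Fix $i \in \langle n\rangle$ and, assuming first that all relevant values are finite, set $j = \Max_p(i)$ and $k = \Max_q(j)$; the goal is $\Max_{pq}(i) = k$. The first observation is that the natural inclusions $\overline{Q}_p \hookrightarrow \overline{Q}_{pq}$ and $\overline{Q}_q \hookrightarrow \overline{Q}_{pq}$ (the latter obtained by shifting $a$ by $\ell$, the length of $p$) preserve $\leq$ by Lemma \ref{concat} and identify $y_j(p)$ with $x_j(q)$, each being the element $x_{\ell+j}(pq)$. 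Concatenating the witness $x_i(p) \leq y_j(p)$ with $x_j(q) \leq y_k(q)$ through this common element gives $x_i(pq) \leq y_k(pq)$, establishing the easy inequality $\Max_{pq}(i) \leq k$ in the $v_{\ell+\ell'}$-order.

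For the reverse inequality, set $k' = \Max_{pq}(i)$ and apply Lemma \ref{monotonic} to $x_i(pq) \leq x_{\ell+\ell'+k'}(pq)$. The resulting monotonic sequence runs from $\psi+a = 0$ to $\psi+a = \ell+\ell'$ and so is non-decreasing in $\psi+a$; since $\psi+a$ is integer-valued only on $v$-elements, the first step at which it equals $\ell$ occurs at some $(v, \ell, c^*)$ with $c^* \in [n]$. The key move is to split the sequence there: the initial segment has every term with $a \leq \ell$ and hence embeds into $\overline{Q}_p$, while the terminal segment has every term with $a \geq \ell$ and, after shifting $a$ by $-\ell$, embeds into $\overline{Q}_q$. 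Being monotonic, the halves witness $x_i(p) \leq y_{c^*}(p)$ in $Q_p$ and $x_{c^*}(q) \leq y_{k'}(q)$ in $Q_q$. The first gives $y_j(p) \leq y_{c^*}(p)$, equivalently $x_j(q) \leq x_{c^*}(q)$ in $Q_q$ since the $v_\ell(p) = v_0(q)$ orderings agree; Lemma \ref{maxminordpres} applied in $Q_q$ then yields $\Max_q(j) \leq \Max_q(c^*) \leq k'$, i.e.\ $k \leq k'$. Combined with the easy direction this forces $k = k'$.

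The $\pm\infty$ cases reduce to the same split: the assertions for $i \in \{-\infty, \infty\}$ are immediate from the definitions, and when $j$ or $k$ equals $\infty$, a finite $k'$ would via the split produce an intermediate finite $c^*$ making $j$ (and then $k$) finite, contradicting the hypothesis. The main technical obstacle I anticipate is the clean verification that the two halves of the split sequence really do lie in $\overline{Q}_p$ and in the shifted copy of $\overline{Q}_q$; this amounts to tracking which $(y, a)$ combinations are allowed given the monotonicity of $\psi+a$ together with the bounds $\psi+a \leq \ell$ (resp.\ $\geq \ell$) on each side, crucially using that $\psi+a$ takes integer values only on $v$-elements so that no errant $e$-term can straddle the interface.
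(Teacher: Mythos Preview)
Your argument is correct and follows essentially the same route as the paper's proof: both establish $x_i(pq)\leq y_k(pq)$ by chaining through $x_{\ell+j}$, then take a monotonic sequence witnessing $x_i(pq)\leq y_{k'}(pq)$ and split it at a term $(v,\ell,c^*)$ to push the comparison back through $\Max_p$ and $\Max_q$. The paper phrases the second half as directly verifying the defining minimality condition rather than as a separate inequality $k\leq k'$, and it handles the $\pm\infty$ cases up front, but the substance is identical.
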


\begin{proof}
We give the verification for $\Max$.  For $\Min$, flip the argument upside down.

Let $\ell$ be the length of $p$ and $\ell'$ the length of $q$.

It is clear that the two functions are equal on $\pm\infty$.  
Let $i\in[n]$, let $j=\Max_p(i)$.  If $j=\infty$ then by Lemma \ref{monotonic} it
is impossible to have $x_i\leq x_{\ell+\ell'+k}$ for any $k\in[n]$, so $\Max_{pq}(i)=\infty$.
We may therefore assume $j\ne\infty$, and let $k=\Max_q(j)$.  If $k=\infty$, then again
by Lemma \ref{monotonic} it is impossible to have $x_i\leq x_{\ell+\ell'+k'}$ for any
$k'\in[n]$, so $\Max_{pq}(i)=\infty$.  Thus we may assume $k\ne\infty$.  We want to show
that $\Max_{pq}(i)=k$.

In the poset $Q_{pq}$, we have $x_i \leq x_{\ell+j} \leq x_{\ell+\ell'+k}$.
Also, if $x_i\leq x_{\ell+\ell'+k'}$ then there is a monotonic 
sequence in $\overline{Q}_{pq}$ showing $x_i\leq x_{\ell+\ell'+k'}$.  This sequence must 
contain a point of the form $(v,\ell,j')$, so $x_i\leq x_{\ell+j'}$.  By definition of $j$,
we have $x_{\ell+j}\leq x_{\ell+j'}$, hence $x_{\ell_j}\leq x_{\ell+\ell'+k'}$.  By definition
of $k$ we now have $x_{\ell+\ell'+k}\leq x_{\ell+\ell'+k'}$.  Thus $k=\Max_{pq}(i)$, as
desired.
\end{proof}

Let $\g$ be a loop of length $\ell$ on $G_n$.
For $i,j\in[n]$ let 
$$\Drift_\g(i,j)=
\begin{cases}
+ & \mbox{if } x_i\leq y_j \\
- & \mbox{if } x_i\geq y_j \\
0 & \mbox{otherwise.}
\end{cases}
$$
We will write $\Drift_\g(i)$ for $\Drift_\g(i,i)$.

\begin{dfn}\label{drift}
A loop $\g$ is \emph{partially driftless} if $\Drift_\g(i)= 0$ for some $i\in[n]$.\\
A loop $\g$ is \emph{driftless} if $\Drift_\g(i)=0$ for all $i\in[n]$.  \\
A loop $\g$ is \emph{totally driftless} if $\Drift_\g(i,j)=0$ for all $i,j\in[n]$.  
\end{dfn}

Thus $\g$ is totally driftless if and only if $\Max_\g(i)=\infty$ and $\Min_\g(i)=-\infty$
for all $i\in[n]$, and there is a similar description of driftless and partially driftless loops.

\begin{example}
The loop $p$ in Figure \ref{fig:worms} is partially driftless.  In $Q_p$, we have
$x_1\leq x_6=y_1$ and $x_3\leq x_8=y_3$, so $\Drift_p(1)=\Drift_p(3)=+$.  
However $x_2$ and $x_7=y_2$ are incomparable, so $\Drift_p(2)=0$.  Note 
that the number $z_6$ is necessarily greater than $z_1$, but $z_7$ 
can be chosen to be greater than or less than $z_2$.
\end{example}

\begin{lem}\label{partial}
Let $\beta$ and $\g$ be (partially driftless) loops based at $v$, with 
$\Drift_\beta(j)=\Drift_\g(j)=0$.  Then $\Drift_{\beta\g}(j)=0$.
\end{lem}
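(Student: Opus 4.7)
The strategy is to translate the driftlessness hypothesis into statements about $\Max$ and $\Min$, and then combine the functoriality of Lemma \ref{funct} with the monotonicity of Lemma \ref{maxminordpres} to propagate driftlessness through concatenation.

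First I would unpack the definition: for any loop $\alpha$ based at $v$, the condition $\Drift_\alpha(j)=0$ is equivalent to requiring that $\Max_\alpha(j)$ is either $\infty$ or an index $k\in[n]$ with $v(k)>v(j)$, and simultaneously that $\Min_\alpha(j)$ is either $-\infty$ or an index $k\in[n]$ with $v(k)<v(j)$. This is because $y_1,\dots,y_n$ are totally ordered in $Q_\alpha$ by the base vertex $v$, so whether $y_j$ is an upper or lower bound of $x_j$ can be read off directly from $v(j)$. I would also fix the convention that the output order on $\langle n\rangle$ appearing in Lemma \ref{maxminordpres} is the $v$-order on $[n]$ extended by $\pm\infty$ at the top and bottom.

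Then I would show $\Drift_{\beta\g}(j)\ne +$; the claim that $\Drift_{\beta\g}(j)\ne -$ is symmetric, with $\Min$ playing the role of $\Max$. By Lemma \ref{funct}, $\Max_{\beta\g}(j)=\Max_\g(\Max_\beta(j))$. Set $k=\Max_\beta(j)$. If $k=\infty$ then $\Max_{\beta\g}(j)=\infty$ and we are done. Otherwise the driftlessness of $\beta$ at $j$ forces $v(k)>v(j)$, so in particular $x_k\ge x_j$ inside $Q_\g$. Lemma \ref{maxminordpres} then yields $\Max_\g(k)\ge\Max_\g(j)$ in the above output order. Invoking $\Drift_\g(j)=0$, either $\Max_\g(j)=\infty$, which forces $\Max_\g(k)=\infty$; or $v(\Max_\g(j))>v(j)$, which chains to $v(\Max_\g(k))>v(j)$. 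In either case $\Max_{\beta\g}(j)=\Max_\g(k)$ is either $\infty$ or has $v$-value exceeding $v(j)$, so $\Drift_{\beta\g}(j)\ne +$.

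The only real obstacle is the notational bookkeeping around the conventions for orders on $\langle n\rangle$ and the propagation of $\pm\infty$ through the composition; there is no genuinely combinatorial difficulty. Once these conventions are pinned down, the proof is essentially a two-step chase, mirroring the functoriality $\Max_{\beta\g}=\Max_\g\circ\Max_\beta$.
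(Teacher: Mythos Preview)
Your proof is correct and takes a genuinely different route from the paper's. The paper argues directly: assuming $x_j(\beta\g)\leq y_j(\beta\g)$, it invokes Lemma~\ref{monotonic} to get a monotonic sequence in $\overline{Q}_{\beta\g}$, observes that this sequence must pass through the junction vertex $v_\ell$ (i.e., contain some $(v,\ell,c)$ representing $x_{\ell+j'}$), and then uses the total order of Remark~\ref{totalorder} to compare $x_{\ell+j'}$ with $x_{\ell+j}$, deriving a contradiction with one of the two drift hypotheses. Your approach instead packages the same content into the $\Max/\Min$ machinery: you translate $\Drift_\alpha(j)=0$ into a condition on $\Max_\alpha(j)$ and $\Min_\alpha(j)$ relative to the $v$-order, then push the condition through the composition $\Max_{\beta\g}=\Max_\g\circ\Max_\beta$ via Lemmas~\ref{funct} and~\ref{maxminordpres}. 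The paper's argument is shorter and more self-contained; yours is more structural and makes the role of the functoriality lemma explicit, at the cost of having to pin down the output order on $\langle n\rangle$ (which you correctly identify as the $v$-order rather than the standard order on $[n]$, a point the paper leaves implicit). Both arguments ultimately hinge on the same fact---that comparisons across the concatenation factor through the junction vertex---but yours isolates this as an instance of Lemma~\ref{funct} rather than re-deriving it from monotonic sequences.
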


\begin{proof}
Let $\ell$ be the length of $\beta$.  Suppose $x_j(\beta\g)\leq y_j(\beta\g)$.  Then 
there is a monotonic sequence \eqref{Q_p-sequence} proving this.  In the sequence 
there must be a representative of $x_i$ for some $\ell+1\leq i \leq \ell+n$.
Now $x_i=x_{\ell+j}$ or $x_i\leq x_{\ell+j}$ would contradict $\Drift_\beta(j)=0$,
and $x_i\geq x_{\ell+j}$ would contradict $\Drift_\g(j)=0$.  By Remark \ref{totalorder}
these are the only possibilities.  Thus $\Drift_{\beta\g}(j) \ne +$.  Similarly $\Drift_{\beta\g}(j)\ne -$.
\end{proof}

\begin{lem}\label{driftsuck}
Let $\beta$ and $\g$ be loops on $G_n$ based at the vertex $v$.
If $\g$ is totally driftless then $\beta\g$ is totally driftless.
\end{lem}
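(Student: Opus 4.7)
The plan is to derive this as an essentially immediate consequence of Lemma \ref{funct}, which says $\Max$ and $\Min$ are functorial under concatenation of paths. The hypothesis that $\g$ is totally driftless means, by the remark following Definition \ref{drift}, that $\Max_\g(i) = \infty$ and $\Min_\g(i) = -\infty$ for every $i \in [n]$. Our goal is to verify the same identities with $\g$ replaced by $\beta\g$.

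First I will recall from the definition that, for $i \in [n]$, the value $\Max_\beta(i)$ lies in $[n] \cup \{\infty\}$ (the value $-\infty$ is reserved for the input $i = -\infty$); analogously $\Min_\beta(i) \in [n] \cup \{-\infty\}$ for $i \in [n]$. This is the only subtlety: we need to know that the intermediate value produced by $\beta$ stays in the range where the totally driftless hypothesis on $\g$ applies (or is already $\pm\infty$, which is fixed by $\Max_\g$ and $\Min_\g$ respectively).

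Next, fix $i \in [n]$ and apply Lemma \ref{funct}: $\Max_{\beta\g}(i) = \Max_\g(\Max_\beta(i))$. By the previous step $\Max_\beta(i) \in [n] \cup \{\infty\}$. In either case, $\Max_\g$ sends this value to $\infty$: if $\Max_\beta(i) \in [n]$ this uses the total driftlessness of $\g$, and if $\Max_\beta(i) = \infty$ this uses the definition of $\Max_\g$ at $\infty$. So $\Max_{\beta\g}(i) = \infty$. The same argument with inequalities reversed gives $\Min_{\beta\g}(i) = -\infty$. Since $i \in [n]$ was arbitrary, $\beta\g$ is totally driftless.

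The main (modest) obstacle is just making sure the case analysis at $\pm\infty$ is handled cleanly; there are no further difficulties because Lemma \ref{funct} has already done the heavy lifting of reducing the drift of a concatenation to a composition of the drift functions of its pieces.
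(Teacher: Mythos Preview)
Your proof is correct. You take a slightly different route than the paper: the paper argues directly by contradiction, assuming $\Max_{\beta\g}(i)=j<\infty$, invoking Lemma~\ref{monotonic} to get a monotonic sequence witnessing $x_i(\beta\g)\leq y_j(\beta\g)$, noting it must pass through some $(v,\ell,k)$ at the junction between $\beta$ and $\g$, and then using the tail of the sequence (via Lemma~\ref{concat}) to conclude $\Max_\g(k)\leq j<\infty$. You instead simply apply Lemma~\ref{funct} as a black box, which packages exactly that monotonic-sequence argument into the identity $\Max_{\beta\g}=\Max_\g\circ\Max_\beta$. Your route is shorter and cleaner since the heavy lifting has already been done in Lemma~\ref{funct}; the paper's proof is slightly more self-contained at this point, effectively re-deriving the relevant special case of functoriality inline.
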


\begin{proof}
Suppose not; then without loss of generality there exists $i\in[n]$ with $\Max_{\beta\g}(i)=j<\infty$.
Thus $x_i(\beta\g)\leq y_j(\beta\g)$ and by Lemma \ref{monotonic} there is a monotonic
sequence proving this inequality.  This sequence must contain 
$(v,\ell,k)$ for some $k$, where $\ell$ is the length of $\beta$.
Starting there, the remainder of the sequence 
(in combination with Lemma \ref{concat}) shows that $\Max_\g(k)\leq j<\infty$,
a contradiction.
\end{proof}

\begin{lem}\label{cyclic}
Cyclic permutations of driftless loops are driftless.
\end{lem}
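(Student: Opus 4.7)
My plan is to reformulate driftlessness in a form where the compositional identity for $\Max$ (Lemma \ref{funct}) applies cleanly across the cyclic cut, even though the two ``halves'' of the loop are not themselves loops.

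First, I would write any cyclic rotation as $\gamma = \alpha\beta \mapsto \gamma' = \beta\alpha$, with $\alpha$ a sub-path from the basepoint $v$ of $\gamma$ to some vertex $w$ and $\beta$ a sub-path from $w$ back to $v$. For any path $p$ from $u \in S_n$ to $u' \in S_n$ I would then define the ``value-ordered $\Max$''
\[
\Sigma_p(k) = u'\bigl(\Max_p(u^{-1}(k))\bigr), \qquad k \in \langle n \rangle,
\]
with the convention $u(\pm\infty) = \pm\infty$, and $\Sigma'_p$ analogously using $\Min$. The point of this relabeling is that (i) $\Sigma_p$ will be non-decreasing, fix $\pm\infty$, and send $[n]$ into $[n] \cup \{\infty\}$; (ii) Lemma \ref{funct} rewrites as $\Sigma_{pq} = \Sigma_q \circ \Sigma_p$; and (iii) for a \emph{loop} $\gamma$ at $v$, driftlessness is equivalent to $\Sigma_\gamma(k) > k > \Sigma'_\gamma(k)$ for every $k \in [n]$, since $\Drift_\gamma(i) = +$ means exactly $v(\Max_\gamma(i)) \leq v(i)$ and this re-indexes via $k = v(i)$.

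With this machinery in hand, (ii) yields $\Sigma_\gamma = \Sigma_\beta \circ \Sigma_\alpha$ and $\Sigma_{\gamma'} = \Sigma_\alpha \circ \Sigma_\beta$, and the endgame is to assume $\Sigma_{\gamma'}(k) \leq k$ for some $k \in [n]$, set $m = \Sigma_\beta(k)$, rule out $m = \infty$ (since $\Sigma_\alpha(\infty) = \infty \not\leq k$), and for $m \in [n]$ use monotonicity of $\Sigma_\beta$ to compute
\[
\Sigma_\gamma(m) = \Sigma_\beta(\Sigma_\alpha(m)) \leq \Sigma_\beta(k) = m,
\]
contradicting driftlessness of $\gamma$. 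A symmetric chase with $\Sigma'$ rules out negative drift in $\gamma'$, so $\gamma'$ is driftless.

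The main thing that will require care, and which I expect to be the only real obstacle, is the path generalization of Lemma \ref{maxminordpres} needed for (i): the original proof uses $x_i \leq x_j \iff y_i \leq y_j$, which fails for non-loops since $v \ne u'$ in general, but transitivity through the witness $y_{\Max_p(j)}$ should still deliver the monotonicity $v'(\Max_p(i)) \leq v'(\Max_p(j))$ whenever $v(i) \leq v(j)$. Everything else reduces to bookkeeping with the $\pm\infty$ conventions.
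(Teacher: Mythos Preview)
Your argument is correct and takes a genuinely different route from the paper's.

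The paper argues directly with the poset machinery: assuming $x_i(\gamma_k)\leq x_{\ell+i}(\gamma_k)$, it takes a monotonic witnessing sequence, shifts it by $\ell$, and concatenates to obtain a monotonic sequence in $\overline Q_{\gamma_k^2}$ proving $x_i\leq x_{2\ell+i}$. Since $\gamma_k^2=\beta\gamma\alpha$ contains an honest copy of $\gamma$, that monotonic sequence must cross the $\gamma$-region and hence furnishes a subsequence witnessing $\Drift_\gamma(j)\neq 0$ for some $j$. No use is made of $\Max$, $\Min$, or Lemma~\ref{funct}.

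Your approach instead exploits the compositional structure: you recast driftlessness as the pointwise inequality $\Sigma_\gamma(k)>k$ for the value-ordered map $\Sigma_p=u'\circ\Max_p\circ u^{-1}$, invoke Lemma~\ref{funct} to get $\Sigma_\gamma=\Sigma_\beta\Sigma_\alpha$ and $\Sigma_{\gamma'}=\Sigma_\alpha\Sigma_\beta$, and then run a conjugacy argument (if $\Sigma_\alpha\Sigma_\beta$ has a point with $\Sigma_{\gamma'}(k)\leq k$, monotonicity forces $\Sigma_\beta\Sigma_\alpha$ to have one too). This is cleaner algebraically and makes the cyclic invariance manifest as a statement about conjugate monotone self-maps of $\langle n\rangle$; the price is that you must extend $\Max_p$ and the monotonicity of Lemma~\ref{maxminordpres} to non-loop paths. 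You correctly flag this as the only real check, and indeed it goes through exactly as you indicate: if $x_{i_1}\leq x_{i_2}$ and $\Max_p(i_2)=j_2\in[n]$, then $x_{i_1}\leq y_{j_2}$ forces $y_{\Max_p(i_1)}\leq y_{j_2}$ by the minimality in the definition of $\Max_p$, with no appeal to the loop identity $x_i\leq x_j\Leftrightarrow y_i\leq y_j$. (Note also that Lemma~\ref{funct} is already stated for paths, so the composition law needs no extension.)
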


\begin{proof}
Let $\g=(v_0,e_1,\cdots,v_\ell)$ be a driftless loop and let $\g_k$ be a cyclic
permutation of $\g$ starting at $v_k$.  Suppose $x_i(\g_k)\leq x_{\ell+i}(\g_k)$.
Fix a monotonic sequence showing this inequality, and add $\ell$ to the second
coordinate of each element to obtain a new sequence, and concatenate the original
sequence with the new one.  This longer sequence shows $x_i(\g_k^2)\leq x_{2\ell+i}(\g_k^2)$
but it contains a subsequence showing for some $j$ that $\Drift_\g(j)\ne 0$.
\end{proof}

\begin{dfn}
A \emph{face subgraph} of $G_n$ is a subgraph $H$ such that every edge of $H$ is contained
in a loop in $H$.  Equivalently $H$ is a face subgraph if each connected component 
of $H$ is strongly connected.
\end{dfn}

\begin{dfn}
A strongly connected subgraph $H\subseteq G_n$ \emph{drifts} if there exist 
$v\in VH$, $j\in [n]$ and $\e\in\{+,-\}$ such that for every loop $\g$ in $H$ based
at $v$, $\Drift_\g(j)=\e$.  Otherwise $H$ is \emph{driftless}.

A face subgraph $H\subseteq G_n$ \emph{drifts} if any of its connected components
drifts; otherwise $H$ is driftless.
\end{dfn}

\begin{prop}\label{driftlessloop}
Let $H$ be a strongly connected subgraph of $G_n$.  The following are equivalent:
\begin{enumerate}
\item
$H$ is driftless;
\item
there exists a totally driftless loop $\g$ with support contained in $H$;
\item
there exists a totally driftless loop $\g$ with support equal to $H$.
\end{enumerate}
\end{prop}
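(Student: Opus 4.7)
The plan is to prove the cycle $(3)\Rightarrow(2)\Rightarrow(1)\Rightarrow(2)\Rightarrow(3)$; the implication $(3)\Rightarrow(2)$ is immediate.

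For $(2)\Rightarrow(1)$, suppose $\g$ is totally driftless with support contained in $H$. Given $v\in VH$, $j\in[n]$, and $\e\in\{+,-\}$, I pick a vertex $w$ of $\g$ and, using strong connectivity of $H$, paths $\alpha_1\colon v\to w$ and $\alpha_2\colon w\to v$. I would then show that the loop $\delta=\alpha_1\g\alpha_2$ at $v$ is totally driftless by a ``trapped subsequence'' argument parallel to the proof of Lemma \ref{driftsuck}: any monotonic sequence in $\overline{Q}_\delta$ witnessing $\Max_\delta(i)<\infty$ has $\psi+a$ advancing by at most $1/2$ per step, so it must pass through vertex-type elements $(v,|\alpha_1|,c_1)$ and $(v,|\alpha_1|+|\g|,c_2)$ for some $c_1,c_2\in[n]$. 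The intermediate subsequence, after shifting its second coordinate by $-|\alpha_1|$, yields a witness in $\overline{Q}_\g$ that $\Max_\g(c_1)<\infty$, contradicting total driftlessness of $\g$. The $\Min$ case is symmetric, so $\Drift_\delta(j)=0\ne\e$. The implication $(2)\Rightarrow(3)$ is simpler: I enumerate the edges of $H$ not in the support of $\g$ as $e_1,\ldots,e_k$, produce via strong connectivity a loop $\beta_i$ at a vertex of $\g$ traversing $e_i$, and iterate Lemma \ref{driftsuck} to conclude that $\beta_1\cdots\beta_k\g$ is totally driftless with support exactly $H$.

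The main content is $(1)\Rightarrow(2)$. Fix $v\in VH$ and let $\leq_v$ be the total order on $[n]\cup\{-\infty,\infty\}$ in which $a\leq_v b$ iff $v(a)\leq v(b)$ for $a,b\in[n]$, with $\pm\infty$ as extrema. By Lemma \ref{maxminordpres}, every $\Max_\g$ and $\Min_\g$ for loops at $v$ is order-preserving with respect to $\leq_v$ and fixes $\pm\infty$; moreover $\Drift_\g(j)\ne+$ is equivalent to $\Max_\g(j)>_v j$. Enumerate $[n]$ in $\leq_v$-order as $j_1<_v j_2<_v\cdots<_v j_n$. Since $H$ is driftless, for each $k$ there is a loop $\g_k^+$ at $v$ with $\Max_{\g_k^+}(j_k)>_v j_k$. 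By Lemma \ref{funct}, $\Max_{\g_1^+\cdots\g_n^+}=\Max_{\g_n^+}\circ\cdots\circ\Max_{\g_1^+}$. I claim this composition sends every element of $[n]$ to $\infty$: by induction on $k$, after the first $k$ factors the image of any $j\in[n]$ is $\geq_v j_{k+1}$, because order-preservation carries the bound from stage $k$ into the input of $\Max_{\g_{k+1}^+}$, and then $\Max_{\g_{k+1}^+}(j_{k+1})>_v j_{k+1}$ lifts the image to $\geq_v j_{k+2}$. After $n$ stages the image exceeds $j_n$ in $\leq_v$, hence equals $\infty$. The mirror construction yields $\g^-:=\g_1^-\cdots\g_n^-$ with $\Min_{\g^-}\equiv-\infty$ on $[n]$, and $\g^+\g^-$ is then totally driftless: $\Max_{\g^+\g^-}$ is $\infty$ on $[n]$ already from the first factor, and $\Min_{\g^+\g^-}$ is $-\infty$ on $[n]$ because $\Min_{\g^-}$ annihilates all of $[n]\cup\{-\infty\}$.

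The hardest part is the inductive claim in $(1)\Rightarrow(2)$: enumerating indices in $\leq_v$-order is essential, since only this ordering allows order-preservation of $\Max$, together with the strict gain $\Max_{\g_{k+1}^+}(j_{k+1})>_v j_{k+1}$, to push the image forward one $\leq_v$-step at each stage; an arbitrary ordering would not sustain this propagation. Everything else is assembled from the monotonic-sequence machinery of Lemma \ref{monotonic} together with Lemmas \ref{maxminordpres}, \ref{funct}, and \ref{driftsuck}.
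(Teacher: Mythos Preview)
Your proof is correct and uses the same toolkit as the paper (Lemmas \ref{monotonic}, \ref{maxminordpres}, \ref{funct}, \ref{driftsuck}), but the execution of two implications differs in a way worth recording. For the direction toward (1), the paper proves $(3)\Rightarrow(1)$: since in (3) the support of $\g$ equals $H$, every $v\in VH$ lies on $\g$, so one simply takes a cyclic permutation $\g_v$ of $\g$ based at $v$ and invokes Lemma \ref{cyclic} to get $\Drift_{\g_v}(j)=0$. You instead prove $(2)\Rightarrow(1)$ directly by conjugating $\g$ with connecting paths, which amounts to reproving a two-sided variant of Lemma \ref{driftsuck}; this avoids Lemma \ref{cyclic} at the cost of an extra monotonic-sequence argument (and note that $w$ must be the base point of $\g$, not an arbitrary vertex on it, for the concatenation $\alpha_1\g\alpha_2$ to be defined). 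For $(1)\Rightarrow(2)$, the paper tracks only the single extremal index $i=\s^{-1}(1)$, adaptively appending loops so as to push $\Max(i)$ strictly up the $\leq_\s$ order until it hits $\infty$, then does the same for $\Min$ at $j=\s^{-1}(n)$, and finally invokes Lemma \ref{maxminordpres} once to conclude $\Max\equiv\infty$ and $\Min\equiv-\infty$ on all of $[n]$. Your non-adaptive scheme pre-selects $n$ loops and uses Lemma \ref{maxminordpres} inside the induction to advance every index one $\leq_v$-step per stage. Both arguments rest on the same key observation that $\Drift_\g(k)\ne+$ forces $\Max_\g(k)>_v k$; the paper's version is a little shorter because focusing on the extremal index defers the appeal to order-preservation to a single closing step.
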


\begin{proof}
The last two statements are equivalent by Lemma \ref{driftsuck}:
if $\g$ is a totally driftless loop with support contained in $H$, and $\beta$ is any
loop with support equal to $H$, then $\beta\g$ is a totally driftless loop with support
equal to $H$.

Statement (3) easily implies statement (1):  for fixed $v$, $j$, $\e$ let $\g_v$ be
a cyclic permutation of $\g$ which starts at $v$.  By Lemma \ref{cyclic}
$\Drift_{\g_v}(j)=0\ne\e$.

Last, we show (1) implies (2).  
Let $\g_0$ be a loop based at $\s$ and supported in $H$.  Let $i=\s^{-1}(1)$
and $j=\s^{-1}(n)$, so that $x_i\leq x_k \leq x_j$ for all $k\in[n]$.

Suppose $\Max_{\g_0}(i)=k\ne\infty$.
As $H$ is driftless, we may pick a loop 
$\g_1$ based at $v$ and supported in $H$ such that $\Drift_{\g_1}(k)\ne+$. 
$\Max_{\g_0\g_1}(i)>k$.  We can continue this process until we have a loop
$\beta_0$ with $\Max_{\beta_0}(i)=\infty$.

Then, similarly, we concatenate loops on to the end of $\beta_0$ to create
a loop $\beta$ with $\Min_\beta(j)=-\infty$.  Note that $\Max_\beta(i)=\infty$
(by Lemma \ref{funct}).  Now, by Lemma \ref{maxminordpres} $\beta$ is
totally driftless, with support contained in $H$.
\end{proof}

\begin{cor}\label{enlarge}
If $K$ and $H$ are strongly connected, $K\subseteq H$, and $K$ is driftless,
then $H$ is driftless.
\end{cor}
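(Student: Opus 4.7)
The plan is to derive this essentially as a direct consequence of Proposition \ref{driftlessloop}, which characterizes driftless strongly connected subgraphs via the existence of a totally driftless loop supported in them. The key observation is that the condition ``contains a totally driftless loop'' is monotone with respect to inclusion among strongly connected subgraphs.

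More precisely, I would argue as follows. Since $K$ is strongly connected and driftless, the implication (1) $\Rightarrow$ (2) of Proposition \ref{driftlessloop} applied to $K$ yields a totally driftless loop $\gamma$ with support contained in $K$. Because $K \subseteq H$, the support of $\gamma$ is also contained in $H$. Now $H$ is strongly connected by hypothesis, so the reverse implication (2) $\Rightarrow$ (1) of Proposition \ref{driftlessloop}, applied this time to $H$, immediately gives that $H$ is driftless.

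There is no real obstacle here: all the work was done in proving Proposition \ref{driftlessloop}, and in particular in Lemma \ref{driftsuck}, which provides the monotonicity needed to upgrade a totally driftless loop supported in a subgraph into one with support equal to any larger strongly connected subgraph (although for the corollary we only need containment, not equality of support). The only thing worth double-checking is that the statement of Proposition \ref{driftlessloop} is formulated in a way that applies to an arbitrary strongly connected $H$ containing the loop $\gamma$, not just to the minimal such $H$; this is exactly the content of the equivalence (1) $\Leftrightarrow$ (2).
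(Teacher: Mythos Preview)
Your proposal is correct and matches the paper's own (implicit) argument: the corollary is stated immediately after Proposition~\ref{driftlessloop} with no separate proof, precisely because it follows at once from the equivalence (1)~$\Leftrightarrow$~(2) there, exactly as you describe.
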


\section{Measure preserving functions}
\label{mainsec}

In this section we analyze the distributions of order patterns arising from 
(almost aperiodic) measure preserving functions
$$\AMP=\{f\in\A \mid \Leb(f^{-1}(S))=\Leb(S) \ \mbox{ for all measurable sets }S\}.$$
Our main theorem is that the image $\bmu_n(\AMP)$ is a union of open faces of a polytope
$P_n\subset \D_n$ of dimension $n!-(n-1)!$, and that there is an easily checkable 
combinatorial criterion for determining whether a particular face of $P_n$ is in the
image.

\begin{rem}\label{othermeasures}
For most of these results it is not essential that $\Leb$ be the measure preserved
by $f$.  That is, given a function $f\in\A$ one could choose an invariant
measure $\lambda$ and proceed with this section, everywhere replacing $\Leb$
with $\lambda$.  For some steps it may be necessary to assume $\lambda$ has no
atoms.
\end{rem}

We start by observing that Theorem \ref{cantor} would not hold if $\A$ were 
replaced by $\AMP$.  If $\s\in S_n$ let $\d_\s\in\D_n$ denote the distribution whose
value is 1 on $\s$ and 0 elsewhere.

\begin{lem}\label{balayage}
If $J\subseteq I$ has positive measure and $f:J\to J$ is aperiodic and measure preserving
then both $J_+=\{x\in J \mid f(x)>x\}$ and $J_-=\{x\in J \mid f(x)<x \}$ have positive measure.

In particular, there is no $f\in\AMP$ such that $\bmu_2(f)=\d_{(1 2)}$ or $\d_{(2 1)}$.
\end{lem}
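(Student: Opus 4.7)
The plan is to argue the main assertion by contradiction using measure preservation; the statement about $\bmu_2$ will then follow by taking $J = I$. Suppose, by way of contradiction, that $\Leb(J_-) = 0$ (the case $\Leb(J_+) = 0$ is symmetric). Then $f(x) \geq x$ for Lebesgue-almost every $x \in J$, and I aim to derive that $f(x) = x$ on a full-measure subset of $J$, which will contradict aperiodicity.

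The key technical step is to analyze the sublevel sets $A_t = \{x \in J : x \leq t\}$ for $t \in \R$. The inequality $f(x) \geq x$ a.e.\ immediately gives the inclusion $f^{-1}(A_t) \subseteq A_t$ modulo a null set: if $f(x) \leq t$ and $f(x) \geq x$, then $x \leq t$. Combining this inclusion with the identity $\Leb(f^{-1}(A_t)) = \Leb(A_t)$ coming from measure preservation forces $f^{-1}(A_t) = A_t$ up to a null set $N_t$.

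Next I would take the countable union $N = \bigcup_{t \in \Q} N_t$, which is still null, and observe that for every $x \in J \setminus N$ and every rational $t$ one has $x \leq t \iff f(x) \leq t$. Density of $\Q$ in $\R$ then forces $f(x) = x$ on $J \setminus N$, so fixed points of $f$ form a full-measure subset of $J$. Fixed points have orbits of size one and hence are not aperiodic, contradicting the hypothesis that $f|_J$ is aperiodic. The symmetric argument rules out $\Leb(J_+) = 0$, completing the proof of the first assertion.

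For the consequence, if $\bmu_2(f) = \d_{(12)}$ then a.e.\ $x \in I$ has order pattern $(1\,2)$, meaning $x < f(x)$, so $\Leb(I_-) = 0$; symmetrically $\bmu_2(f) = \d_{(21)}$ would force $\Leb(I_+) = 0$. Either outcome is ruled out by the first part with $J = I$. The only subtle point in the argument is the passage from the countable family of a.e.\ equalities $\{f^{-1}(A_t) = A_t\}_{t \in \Q}$ to the pointwise conclusion $f(x) = x$ a.e., which is where density of $\Q$ is essential; I do not anticipate any serious obstacle.
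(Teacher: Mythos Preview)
Your argument is correct, but it takes a different route from the paper's. The paper's proof is a two-line integration argument: if $\Leb(J_-)=0$ then (using aperiodicity to pass from $f(x)\geq x$ to $f(x)>x$ a.e.) one has $\int_J (f(x)-x)\,d\Leb>0$, whereas measure preservation forces $\int_J f=\int_J x$ and hence $\int_J (f(x)-x)\,d\Leb=0$. Your approach instead works at the level of sublevel sets, showing that $f(x)\geq x$ a.e.\ together with measure preservation forces each $A_t$ to be $f$-invariant mod null, and then a countable density argument upgrades this to $f=\mathrm{id}$ a.e. The integration proof is shorter and exploits a single standard identity; your argument is a bit more hands-on but entirely integration-free, and it isolates the stronger intermediate fact that a measure-preserving self-map with $f\geq\mathrm{id}$ a.e.\ must actually equal the identity a.e. Both arguments are elementary and there is no gap in yours.
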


\begin{proof}
Suppose $\Leb(J_-)=0$, i.e., $f(x)>x$ for almost all $x\in J$.  Then there is some
$\e$ such that $\Leb\{x\mid f(x)-x>\e\}>0$, hence $\int_J f(x)-x >0$.  But $f$ measure preserving
implies $\int_J f(x)-x=0$.  Similarly for $\Leb(J_+)$.
\end{proof}

Note that $\d_{(1 2)}$ and $\d_{(2 1)}$ are in the closure of $\bmu_2(\AMP)$ since 
$\bmu_2(f_\e)$ can be made arbitrarily close to these distributions by choosing 
$f_\e(x)=x+\e \mod 1$.

\subsection*{The flow polytope $P_n$}

Lemma \ref{balayage} notwithstanding, there is a much more serious reason
for the failure of Theorem \ref{cantor} in the measure preserving category.
For $f\in\AMP$, there is an additional set of constraints on
$\bmu(f)$ beyond compatibility of the measures $\bmun(f)$.  Namely,
the order pattern of $(fx,f^2x, \ldots)$ must be distributed 
in the same way as the order pattern of $(x,fx,f^2x,\ldots)$.
More precisely, if $I_\s=\{x\in I \mid \bs_n^f(x)=\s\}$ then 
$\bmu_n(f)(\s)=\Leb(I_\s)=\Leb(f^{-1}(I_\s))=\Leb(\{x \in I\mid \bs_n^f(f(x))=\s\}).$
Thus if $f\in\AMP$ we necessarily have
\begin{equation}\label{eq:extraconditions}
\mu_n(f)(\s)=\sum\limits_{\rho'(\s')=\s} \mu_{n+1}(f)(\s').
\end{equation}
(Recall that $\rho'(\s)=\Order(\s|_{[2,n+1]})$.)

The functions $\rho$ and $\rho'$, now thought of as maps $S_n\to S_{n-1}$,
induce maps $\rho_*,\rho'_*:\D_n\to\D_{n-1}$.  Explicitly, for $\mu\in\D_n$,
\begin{align*}
\rho_*(\mu)(\s)&=\sum\limits_{\s'\in\rho^{-1}(\s)} \mu(\s')\\
\rho'_*(\mu)(\s)&=\sum\limits_{\s'\in\rho'^{-1}(\s)} \mu(\s').
\end{align*}
Thus by \eqref{eq:extraconditions} and compatibility,
$\rho_*(\bmu(f))=\rho'_*(\bmu(f))$ for $f\in\AMP$.

\begin{dfn}
Set $P_n=\{\mu\in\D_n\mid \rho_*(\mu)=\rho'_*(\mu)\}$.  
\end{dfn}

As each condition \eqref{eq:extraconditions} is linear, $P_n$ is a polytope 
contained in the simplex $\D_n$, and $P_n\cap\bdry\D_n=\bdry P_n$.
We have already proved the following lemma.

\begin{lem}
If $f\in\AMP$ then $\bmu_n(f)\in P_n$.
\end{lem}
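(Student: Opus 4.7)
The plan is to show directly that both $\rho_*(\bmu_n(f))$ and $\rho'_*(\bmu_n(f))$ equal $\bmu_{n-1}(f)$, hence coincide. The first equality is exactly the compatibility of the sequence $\{\bmu_k(f)\}$, which was already observed: for $\s \in S_{n-1}$,
$$\rho_*(\bmu_n(f))(\s) = \sum_{\s' \in \rho^{-1}(\s)} \Leb\{x \mid \bs_n^f(x) = \s'\} = \Leb\{x \mid \bs_{n-1}^f(x) = \s\} = \bmu_{n-1}(f)(\s),$$
since restricting the order pattern of $(x, f(x), \dots, f^{n-1}(x))$ to its first $n-1$ entries yields the order pattern of $(x, f(x), \dots, f^{n-2}(x))$.

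The second equality is where measure preservation enters. The key observation is that $\rho'(\bs_n^f(x)) = \bs_{n-1}^f(f(x))$, because restricting the order pattern of $(x, f(x), \dots, f^{n-1}(x))$ to its \emph{last} $n-1$ entries gives the order pattern of $(f(x), f^2(x), \dots, f^{n-1}(x))$, which equals $\bs_{n-1}^f(f(x))$. Therefore
$$\rho'_*(\bmu_n(f))(\s) = \Leb\{x \mid \rho'(\bs_n^f(x)) = \s\} = \Leb\{x \mid \bs_{n-1}^f(f(x)) = \s\} = \Leb(f^{-1}(I_\s^{(n-1)})),$$
where $I_\s^{(n-1)} = \{y \mid \bs_{n-1}^f(y) = \s\}$. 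Since $f$ preserves Lebesgue measure, this equals $\Leb(I_\s^{(n-1)}) = \bmu_{n-1}(f)(\s)$.

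Combining the two displays gives $\rho_*(\bmu_n(f)) = \rho'_*(\bmu_n(f))$, so $\bmu_n(f) \in P_n$. There is no real obstacle here; the work was done in setting up equation \eqref{eq:extraconditions}, and the only task is to translate that pointwise identity into the pushforward language defining $P_n$.
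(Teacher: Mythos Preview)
Your proof is correct and follows exactly the approach in the paper: the discussion preceding the lemma establishes compatibility (giving $\rho_*(\bmu_n(f))=\bmu_{n-1}(f)$) and equation \eqref{eq:extraconditions} (giving $\rho'_*(\bmu_n(f))=\bmu_{n-1}(f)$), so the lemma is presented there as already proved. Your write-up simply makes these two computations explicit.
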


\begin{example}\label{ex:polytopes}
The polytope $P_2$ is all of $\D_2$; this is a line segment connecting
$\chi_{(1 2)}$ to $\chi_{(2 1)}$.  The preimage of a point 
$a \chi_{(1 2)} + (1-a) \chi_{(2 1)}\in \Int(P_2)$ under the map $\rho_*$ is a
3-dimensional square pyramid with apex $a\chi_{(1 2 3)}+(1-a)\chi_{(3 2 1)}$.
If $0<a<1/2$ the vertices of the square base are $a(\chi_\s +\chi_\t )+(1-2a)\chi_{(3 2 1)}$ where 
$\s\in\{(1 3 2), (2 3 1)\}$ and $\t\in\{(2 1 3),(3 1 2)\}$, whereas if $1/2<a<1$ then the 
vertices are $(1-a)(\chi_\s+\chi_\t) + 2a\chi_{(3 2 1)}$ with the same choices
for $\s$ and $\t$.  If $a=1/2$ then the square base is a (2-dimensional) face
of $P_3$; it corresponds to the face subgraph $H\subset G_2$ consisting of
all the edges except the loops $(1 2 3)$ and $(3 2 1)$. 

The entire polytope $P_3$ is 4-dimensional; it resembles a suspension of the (middle)
square pyramid, except that the apex of the pyramid lies on the segment connecting
the suspension points $\chi_{(1 2 3)}$ and $\chi_{(3 2 1)}$, so that $P_3$ has six
vertices rather than seven.
See Figure \ref{fig:polytopes} (in which $\rho_*$ projects vertically).
\end{example}

\begin{figure}[ht]
  \psfrag{1}{{\small$\chi_{123}$}}
  \psfrag{2}{{\small$\chi_{321}$}}
  \psfrag{3}{{\small$\chi_{12}$}}
  \psfrag{4}{{\small$\chi_{21}$}}
  \psfrag{5}{{\small$\frac 12( \chi_{132}+\chi_{213})$}}
  \psfrag{6}{{\small$\frac 12( \chi_{231}+\chi_{213})$}}
    \centering
  \includegraphics[width=3.5in]{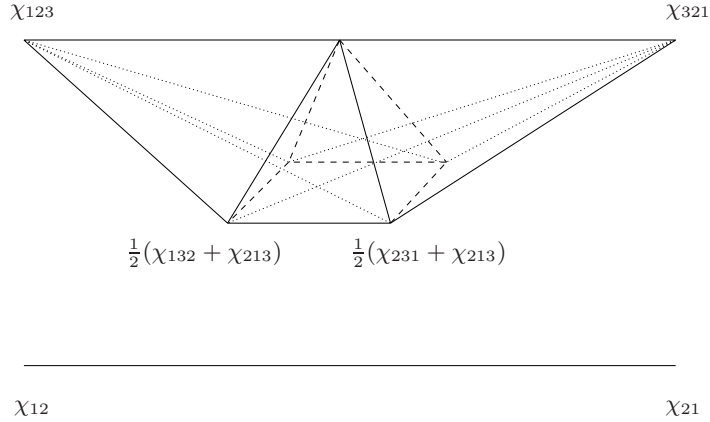}
  \captionsetup{width=.75\textwidth}
  \caption{The polytopes $P_2$ (below) and $P_3$ (above).  Fibers of the (vertical) projection
  are 3-dimensional square pyramids; the preimage of the midpoint of $P_2$ is shown.
  The whole polytope $P_3$ is the join of an interval and a square.}
  \label{fig:polytopes}
\end{figure}

\subsection*{Dictionary between $P_n$ and $G_{n-1}$}
Before we get to the main theorem we establish several connections between
$P_n$ and $G_{n-1}$.

An \emph{edge weighting} on a digraph $G$ is a map $\phi:EG\to[0,1]$ such that 
$\sum \phi(e)=1$.  A \emph{flow} on $G$ is an edge weighting $\phi$ such that for every
$v\in VG$,
$$\sum\limits_{\{e\mid h(e)=v\}} \phi(e)=\sum\limits_{\{e\mid t(e)=v\}} \phi(e).$$
Note that the set of all edge weightings on $G_{n-1}$ is exactly $\D_n$, and
the set of all flows on $G_{n-1}$ is exactly $P_n$.

A flow supported on an embedded loop in $G_{n-1}$ is a vertex of $P_n$.
The set of all flows supported on a face subgraph $H\subset G_{n-1}$ is a face
$F_H$ of $P_n$.  The assignment $H\mapsto F_H$ is an inclusion-preserving
bijection between the set of face subgraphs of $G_{n-1}$ and the set of faces
of $P_n$.  The dimension of $F_H$ is one less than the rank of the first homology 
of $H$.  In particular, if $H=G_{n-1}$ then $F_H=P_n$ has dimension $n!-(n-1)!$.

If two face subgraphs $H,K\subset G_{n-1}$ are disjoint, then 
$F_{H\cup K}=F_H \ast F_K$ where $\ast$ denotes the join.

\begin{example}
By counting the face subgraphs of various ranks in, say, $G_2$, one determines 
the number and structure of faces of $P_3$ of each dimension.  It is instructive
to compare this with the earlier description of $P_3$ given in Example \ref{ex:polytopes}.
\end{example}

\begin{rem}
The dimension of $\D_n$ is $n!-1$, and the conditions \eqref{eq:extraconditions}
impose $(n-1)!$ additional linear constraints.  These constraints are obviously 
independent, since their sum is zero; the fact that $P_n$ has dimension $n!-(n-1)!$
shows that the constraints are otherwise linearly independent.
\end{rem}

\subsection*{Realizable faces}

Here is our main theorem, which we prove after a sequence of lemmas.

\begin{thm}\label{main}
\begin{enumerate}
\item  
The set $\bmu_n(\AMP)$ is a union of open faces of $P_n$.\\
\item
Let $F$ be a face of $P_n$ and let $H$ be the corresponding face subgraph of $G_{n-1}$,
so that $F=F_H$.  Then $\Int(F)\subset\bmu_n(\AMP)$ if and only if $H$ is driftless.
\item
The closure of $\bmu_n(\AMP)$ is $P_n$.
\end{enumerate}
\end{thm}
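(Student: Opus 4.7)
The plan is to prove part (2) in both directions; (1) and (3) then follow as corollaries. Since $\bmu_n(\AMP) \subseteq P_n$ and the open faces $\Int(F_H)$ partition $P_n$ as $H$ ranges over face subgraphs of $G_{n-1}$, (1) becomes $\bmu_n(\AMP) = \bigsqcup_{H \text{ driftless}} \Int(F_H)$. Part (3) follows from (2) plus the fact that $G_{n-1}$ itself is driftless.

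\emph{Only if direction of (2).} Suppose $f \in \AMP$ with $\phi := \bmu_n(f) \in \Int(F_H)$, so the support of $\phi$ is exactly $EH$ and a.e.\ $f$-orbit projects under $\pi_{n-1}$ to an infinite path in a single connected component of $H$. Assume for contradiction some component $H_0$ drifts with data $(v, j, +)$, i.e., every loop in $H_0$ based at $v$ has $\Drift_\gamma(j) = +$. Let $A = \{x \in I_{ap} : \bs_{n-1}^f(x) = v\}$; for any edge $e$ of $H_0$ with $h(e) = v$, the set $\{x : \bs_n^f(x) = e\} \subseteq A$ has positive mass $\phi(e)$, so $\Leb(A) > 0$. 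By Poincar\'e recurrence and Kac's lemma, the first return map $T_A(x) := f^{\tau(x)}(x)$ is a.e.\ well-defined on $A$ and preserves $\Leb|_A$. For a.e.\ $x \in A$ the loop $\gamma(x)$ traced by the projected orbit from time $0$ to $\tau(x)$ lies in $H_0$ and is based at $v$, so $\Drift_{\gamma(x)}(j) = +$; by Corollary \ref{driftlesscoord} applied to the lift of $\gamma(x)$ given by the actual order pattern $\sigma \in S_{\tau(x)+n-1}$ of the first $\tau(x)+n-1$ iterates, this forces $f^{j-1}(x) \leq f^{j-1}(T_A(x))$, and aperiodicity of orbits upgrades this to strict inequality. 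Integrating yields $\int_A f^{j-1}(x)\,dx < \int_A f^{j-1}(T_A(x))\,dx$, contradicting measure preservation of $T_A$.

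\emph{If direction of (2) and part (3).} Given driftless $H$ and $\phi \in \Int(F_H)$, I construct $f \in \AMP$ with $\bmu_n(f) = \phi$. Using the block-sum construction from the convexity subsection I reduce to the case where $H$ is strongly connected, and then by Proposition \ref{driftlessloop} there is a totally driftless loop $\gamma$ with support equal to $H$. I build $f$ as a piecewise affine Markov-type measure-preserving map: partition $I$ into intervals $J_e$ of length $\phi(e)$ indexed by $e \in EH$, define $f|_{J_e}$ to affinely send $J_e$ onto a union of intervals $J_{e'}$ (with $h(e') = t(e)$) in proportions given by the flow, and position the $J_e$ within $[0,1]$ so that the relative order of the first $n$ iterates of any point in $J_e$ agrees with the vertex permutations visited by $\gamma$. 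Total driftlessness of $\gamma$ is precisely what allows this placement to close up consistently. For (3), any sufficiently mixing $f \in \AMP$ with full-support order pattern distribution on $S_n$ (for instance the doubling map $x \mapsto 2x \bmod 1$) satisfies $\bmu_n(f) \in \Int(F_{G_{n-1}}) = \Int(P_n)$; by the only-if direction, $G_{n-1}$ is driftless, so (2) gives $\Int(P_n) \subseteq \bmu_n(\AMP) \subseteq P_n$, and (3) follows.

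The main obstacle is the construction in the \emph{if} direction: arranging the subintervals $J_e$ so that $f$ is simultaneously measure preserving and realizes the exact distribution $\phi$ on order patterns requires careful combinatorial bookkeeping, and it is here that the totally driftless loop from Proposition \ref{driftlessloop} is essential, as it supplies the combinatorial freedom needed to avoid exactly the integral obstruction exploited in the only-if direction.
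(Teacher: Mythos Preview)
Your only-if direction is essentially the paper's Balayage argument (Lemma~\ref{driftlessface}): the first-return map $T_A$ plays the role of the paper's $g:J_{v,j}\to J_{v,j}$, and your integral contradiction is equivalent to the paper's appeal to Lemma~\ref{balayage}. That part is fine.

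The gap is in your if direction. You propose to realize an \emph{arbitrary} $\phi\in\Int(F_H)$ by a Markov-type map on intervals $J_e$ of length $\phi(e)$, with the placement of the $J_e$ governed by a totally driftless loop $\gamma$. But the orbits of such a Markov map follow many different infinite paths in $H$, not just (powers of) $\gamma$; the order-pattern constraint requires that for \emph{every} $x\in J_e$ and every path its orbit might trace, the positions of $J_e, J_{e'}, J_{e''},\ldots$ in $[0,1]$ produce the correct relative order. A single contiguous interval $J_e$ per edge cannot in general accommodate all these constraints simultaneously, and invoking ``total driftlessness of $\gamma$'' does not address this, since $\gamma$ is one specific loop while the dynamics visit all of $H$. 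You acknowledge this is the ``main obstacle'' but do not resolve it.

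The paper avoids this problem entirely. It never tries to realize a general $\phi$ directly. Instead, Lemma~\ref{functions} realizes only the \emph{counting measure} of a single driftless loop $\gamma$: lift $\gamma$ to one permutation $\sigma\in S_m$ and use the explicit piecewise-translation ``permutation function'' $f_\sigma$, whose orbits all follow the same path $\gamma$ (up to an ergodic perturbation on one subinterval). This sidesteps the placement problem completely. To recover all of $\Int(F_H)$, the paper then observes that for each vertex $v=F_\beta$ of $F_H$, the loops $\beta^N\gamma$ are driftless with support $H$, and their counting measures lie in $\Int(F_H)$ and converge to $v$ as $N\to\infty$; convexity of $\bmu_n(\AMP)$ finishes. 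So the missing idea in your proposal is precisely this two-step route---realize loop counting measures via permutation functions, then use $\beta^N\gamma$ plus convexity---rather than a direct Markov construction.
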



\begin{example}
The set $\bmu_2(\AMP)$ is equal to the interior of $P_2$.  The set
$\bmu_3(\AMP)$ consists of $\Int(P_3)$ (which is 4-dimensional) together
with all six of its open 3-dimensional facets, nine of its thirteen open 2-dimensional
faces (including the square face), and two of its thirteen open edges.  None of the 
six vertices of $P_3$ is in $\bmu_3(\AMP)$.

There are sometimes vertices of $P_n$ in $\bmu_n(\AMP)$.  For example the 
embedded loop in $G_4$ with edges $(23451),$ $(34512),$ $(45132),$ $(41325),$ 
$(13254),$ $(31542),$
$(15423),$ $(54123),$ $(51234)$ is driftless, as is easily seen by computing its poset $Q$.  
Hence by Theorem \ref{main} the corresponding vertex of $P_5$ is realizable.
\end{example}

\begin{lem}\label{functions}
Let $\g$ be a driftless loop in $G_{n-1}$.  Then there is
$f\in\AMP$ such that $\mu_n(f)$ equals the counting measure induced on $EG_{n-1}$ 
by $\g$.  In particular $\mu_n(f)$ is in the interior of the face $F_H$, where
$H$ is the (edge) support of $\g$.
\end{lem}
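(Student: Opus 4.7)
I will construct $f$ as a piecewise translation on $\ell$ equal-length subintervals realizing a ``cyclic lift'' of $\gamma$, then perturb by a small irrational rotation to ensure aperiodicity. The combinatorial heart is to produce a permutation $\sigma\in S_\ell$ such that, viewed as a function on $\mathbb{Z}/\ell\mathbb{Z}$, $\Order(\sigma(k),\sigma(k+1),\ldots,\sigma(k+n-1))=e_k$ for every $k$.

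First I would build $\sigma$ from the poset $Q_\gamma$. Identifying $x_i$ with $x_{\ell+i}$ for each $i\in[n-1]$ leaves $\ell$ equivalence classes, naturally indexed by $\mathbb{Z}/\ell\mathbb{Z}$, and the partial order on $Q_\gamma$ descends to a relation on the quotient; a linear extension of its transitive closure is precisely the required $\sigma$, provided the quotient admits no directed cycle. Driftlessness rules out ``length-one'' cycles because $x_i$ and $x_{\ell+i}$ are incomparable in $Q_\gamma$. Longer wrapped cycles will be handled by passing to powers $\gamma^M$: iterating Lemma~\ref{partial} shows every $\gamma^M$ is driftless, and any purported cycle in the quotient is realized by a sequence of witnesses $a_s\le b_s$ in $Q_\gamma$; concatenating sufficiently shifted copies of these (via Lemma~\ref{concat}) in some power $\gamma^T$ would produce a chain $x_i\le x_{T\ell+i}$ in $Q_{\gamma^T}$, forcing $\Drift_{\gamma^T}(i)=+$, a contradiction.

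Given $\sigma$, set $J_k=[(\sigma(k)-1)/\ell,\sigma(k)/\ell)$ and define $f|_{J_k}(x)=(\sigma(k+1)-1)/\ell+g(x-(\sigma(k)-1)/\ell)$, where $g$ is rotation of $[0,1/\ell)$ by a small $\alpha$ with $\ell\alpha$ irrational (and indices of $\sigma$ read modulo $\ell$). Then $f$ is a Lebesgue-measure-preserving bijection sending $J_k$ to $J_{k+1\bmod\ell}$, and $f^\ell|_{J_k}$ is conjugate to the irrational rotation $g^\ell$ of $[0,1/\ell)$, so $f$ has no periodic orbits of positive measure and $f\in\AMP$. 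For $\alpha$ sufficiently small, pairwise differences $f^i(x)-f^j(x)$ for $x\in J_k$ and $0\le i,j\le n-1$ are dominated in magnitude by the ``block'' term $(\sigma(k+i)-\sigma(k+j))/\ell$ and share its sign, so the order pattern of $(x,f(x),\ldots,f^{n-1}(x))$ is $\Order(\sigma(k),\ldots,\sigma(k+n-1))=e_k$. This gives $\mu_n(f)(e)=(1/\ell)\cdot\#\{k:e_k=e\}$, the counting measure of $\gamma$; and since its edge support is all of $H$ it lies in the interior of $F_H$.

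The main obstacle is the cyclic-lift step: transitively closing the quotient relation can, in principle, introduce wrapped comparabilities that driftlessness of $\gamma$ alone does not directly preclude. Controlling these via the power-loop argument combining Lemma~\ref{partial} (driftlessness of every $\gamma^M$) with Lemma~\ref{concat} (shifting and chaining relations) is where the hypothesis really gets used, and is the delicate part of the proof.
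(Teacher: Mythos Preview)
Your plan is correct and is essentially a fleshed-out version of what the paper only sketches. The paper's three-line argument lifts $\gamma$ to some $\sigma\in S_m$, writes down a piecewise-translation ``permutation function,'' and composes with an ergodic map on one subinterval for aperiodicity; but it never explains how the driftlessness hypothesis is used, and its displayed formula has evident index typos. Your construction makes the missing step explicit: you take a \emph{cyclic} lift $\sigma\in S_\ell$, whose existence amounts to linearly extending the quotient of $Q_\gamma$ by $x_i\sim x_{\ell+i}$ for $i\in[n-1]$, and you argue that this quotient has no directed cycle by unrolling a purported cycle into $Q_{\gamma^T}$ via Lemma~\ref{concat} and contradicting driftlessness of the powers $\gamma^T$ (which follows from Lemma~\ref{partial}). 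That strategy is sound: the zero-winding case collapses by antisymmetry of the poset $Q_{\gamma^T}$, and in the nonzero-winding case a pigeonhole on the index $c\in[n-1]$ hit at the vertex levels $v_{r\ell}$ along a monotonic chain (Lemma~\ref{monotonic}) yields $x_{r\ell+c}\le x_{r'\ell+c}$ and hence drift in some power. Your irrational-rotation perturbation plays the same role as the paper's ergodic $\phi$.

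One small point worth adding to your write-up: a driftless loop in $G_{n-1}$ automatically has length $\ell\ge n$, since any $x_i,x_j$ with $|i-j|\le n-1$ are comparable (Remark~\ref{totalorder}, adapted to $G_{n-1}$). This ensures that your length-$n$ windows in $\Z/\ell\Z$ consist of distinct residues, so the block term $(\sigma(k+i)-\sigma(k+j))/\ell$ is indeed nonzero for $0\le i\ne j\le n-1$ and the small-$\alpha$ sign comparison goes through.
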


\begin{proof}
Lift $\g$ to a permutation $\s\in S_m$.  Let $\phi$ be a measure preserving
ergodic function $I\to I$.

We build the \emph{permutation function} corresponding to $\s$:
for $\s\in\Sym_\ell$, set 
$$
\overline f_\s (x) = x+ \frac {\s(i+2)-\s(i+1)} {\ell} \quad \mbox{where }\ i=\lfloor nx \rfloor.
$$
Finally, let $f_\s$ equal $\overline f_\s$ composed with a scaled down version of $\phi$ 
on the interval $[0,1/m]$.  Now $f_\s$ has the desired property.
\end{proof}

\begin{lem}[\emph{Balayage}]\label{driftlessface}
Let $H$ be a connected face subgraph of $G_n$.  Then $H$ is driftless 
if and only if $\Int(F_H) \cap \mu_n(\AMP)\ne\emptyset$.
\end{lem}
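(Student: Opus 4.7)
The plan is to prove the forward direction by direct construction and the reverse direction by Poincar\'e recurrence.

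For the direction ``$H$ driftless $\Rightarrow$ $\Int(F_H) \cap \mu_n(\AMP) \ne \emptyset$'': a connected face subgraph is strongly connected, so Proposition \ref{driftlessloop} produces a totally driftless loop $\gamma$ whose edge support equals $H$. Lemma \ref{functions} then yields an $f \in \AMP$ whose realized distribution is the counting measure of $\gamma$, which by construction lies in $\Int(F_H)$.

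For the reverse direction I argue by contrapositive. Suppose $H$ drifts: fix $v \in VH$, $j \in [n]$, and a sign $\e \in \{+,-\}$ (say $\e = +$, the other case being symmetric) with $\Drift_\gamma(j) = +$ for every loop $\gamma$ at $v$ supported in $H$. Assume for contradiction that some $f \in \AMP$ has $\mu_n(f) \in \Int(F_H)$, and let $J = \{x \in I : \bs_n^f(x) = v\}$. Since $v \in VH$ and $\mu_n(f) \in \Int(F_H)$, the flow at $v$ is strictly positive, so $\Leb(J) > 0$. Moreover, because $\mu_n(f)$ is supported on $H$, measure preservation together with a countable-union-of-null-sets argument forces a.e. orbit to trace a path in $H$ forever.

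Now apply Poincar\'e recurrence to obtain the first-return map $T : J \to J$, $T(x) = f^{L(x)}(x)$, which preserves $\Leb|_J$. For a.e. $x \in J$ the orbit segment $x, f(x), \ldots, f^{L(x)}(x)$ produces a loop $\gamma_x$ at $v$ supported in $H$, so by hypothesis $\Drift_{\gamma_x}(j) = +$. Applying Corollary \ref{driftlesscoord} to the canonical lift of $\gamma_x$ converts this into the pointwise inequality
\[
f^{j-1}(x) \,\le\, f^{L(x)+j-1}(x) \,=\, f^{j-1}(T(x)).
\]
Equality would make $f^{j-1}(x)$ periodic under $f$, contradicting $f \in \A$, so the inequality is strict a.e. on $J$. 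Setting $\psi(x) = f^{j-1}(x)$, we obtain $\psi \circ T > \psi$ a.e. on $J$, which contradicts $\int_J \psi \circ T \, d\Leb = \int_J \psi \, d\Leb$ (a consequence of $T$ preserving $\Leb|_J$ and $\psi$ being bounded).

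The main obstacle is the translation from the combinatorial drift $\Drift_{\gamma_x}(j) = +$ (a statement about the poset $Q_{\gamma_x}$) to the honest inequality $f^{j-1}(x) \le f^{L(x)+j-1}(x)$ among actual real orbit values; this is precisely the content of Corollary \ref{driftlesscoord}, and aperiodicity of $f$ upgrades the weak inequality to a strict one on a positive-measure set. The remaining ingredients -- Poincar\'e recurrence and the observation that a measure-preserving transformation cannot strictly increase an integrable function a.e. -- are standard.
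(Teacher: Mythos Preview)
Your proof is correct and follows essentially the same route as the paper's.  Both arguments establish the forward direction via Proposition~\ref{driftlessloop} and Lemma~\ref{functions}, and both handle the reverse direction by passing to the first-return map on the set $\{x:\bs_n^f(x)=v\}$ and showing that the drift hypothesis forces a strict monotonicity that contradicts measure preservation.  The only cosmetic differences are: the paper constructs the full-measure set $J$ (where each vertex is visited $0$ or $\infty$ times) by hand rather than citing Poincar\'e recurrence; the paper shifts by $f^{j-1}$ to the set $J_{v,j}$ so that the inequality reads $g(y)>y$ directly and then quotes Lemma~\ref{balayage}, whereas you keep the auxiliary $\psi=f^{j-1}$ and integrate; and you make explicit the appeal to Corollary~\ref{driftlesscoord} (translating the poset relation into a genuine inequality of iterates) that the paper leaves implicit.
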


\begin{proof}
Assume $H$ driftless.  By Lemma \ref{driftlessloop} there
is a totally driftless loop $\gamma$ with support $H$.  By Lemma \ref{functions} 
there is $f\in\AMP$ with $\bmu_n(f)\in\Int(F_H)$.

Conversely, assume $H$ drifts.  Let $f\in\AMP$, and suppose that 
$\bmun(f)\in\Int(F_H)$.  Using the drift, we will construct from $f$ a positive measure 
subset of $I$ and a measure preserving function $g$ such that either $g(x)>x$ for all
$x$ or $g(x)<x$ for all $x$.  This will contradict Lemma \ref{balayage}.

Let $J=\{x\in I_{ap}\mid \forall u\in VG_n, |\{i\mid\bs_n^f(f^i(x))=u\}|\in\{0,\infty\}\}$.
Note that $\Leb(J)=1$, since $J\subseteq f^{-1}(J)$ and $I_{ap}=\cup_if^{-i}(J)$.  
Let $v\in VH,j\in[n],\e\in\{+,-\}$ be as asserted in the definition of drift.
Set $J_v=\{x\in J\mid \bs_n^f(x)=v\}$ and 
$J_{v,j}=J\cap f^{(j-1)} (J_v).$
Note $\Leb(J_{v,j})\geq\Leb(J_v)$ are positive by hypothesis.
For $x\in J_v$ let $i(x)$ be the smallest $j>0$ such that $f^j(x)\in J_v.$
For $y\in J_{v,j}$ we write $y=f^{j-1}(x)$ with $x\in J_v$, and now define 
$g:J_{v,j}\to J_{v,j}$ by $g(y)=f^{i(x)}(y).$

Note that $g$ is measure preserving.  To see this consider
$A\subseteq J_{v,j}$ measurable and write 
$B_r=f^{-r}(A)-\cup_{i\in[0,r-1]}f^{-i}(J_{v,j})\subseteq \cup_if^{-i}(J_{v,j})-\cup_{i<n}f^{-i}(J_{v,j})$ 
a sequence with measure decreasing to $0.$  
Write $A_r=B_r\cap J_{v,j}.$  
Note that $g^{-1}(A)=\cup_rA_r$ is a disjoint decomposition and for every 
$n$ there is $\Leb(A)=\Leb(\cup_{r<n}A_r)+\Leb(B_n)$ so that $\Leb(A)=\Leb(g^{-1}(A)).$

Now if $\e=+$, then $g(y)>y$ for all $y\in J_{v,j}$, and if $\e=-$, then
$g(y)<y$ for all $y\in J_{v,j}$.  Either case contradicts Lemma \ref{balayage}.
\end{proof}

\begin{lem}\label{disconnected}
For any face subgraph $H$ of $G_n$, $\Int(F_H)\cap \bmu_n(\AMP)\ne\emptyset$ 
if and only if $\Int(F_K)\cap \bmu_n(\AMP)\ne\emptyset$ 
for every connected component $K$ of $H$.
\end{lem}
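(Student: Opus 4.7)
The plan is to exploit the join decomposition $F_H = F_{K_1} \ast F_{K_2} \ast \cdots \ast F_{K_r}$ for the connected components $K_1, \ldots, K_r$ of $H$, which was already observed in the dictionary between $P_n$ and $G_{n-1}$. Under this identification, a point $\mu \in \Int(F_H)$ corresponds uniquely to a collection of positive weights $t_i$ with $\sum_i t_i = 1$ together with points $\mu_i \in \Int(F_{K_i})$ satisfying $\mu = \sum_i t_i \mu_i$; conversely every such convex combination lies in $\Int(F_H)$.

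The reverse direction is then essentially a convexity argument. Given $f_i \in \AMP$ with $\bmu_n(f_i) \in \Int(F_{K_i})$ for each $i$, pick any positive weights $t_i$ summing to $1$ and iterate the block sum construction from the convexity paragraph of Section 1 (which preserves membership in $\AMP$) to produce $f \in \AMP$ with $\bmu_n(f) = \sum_i t_i \bmu_n(f_i)$. By the previous paragraph this lies in $\Int(F_H)$.

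For the forward direction, suppose $f \in \AMP$ satisfies $\bmu_n(f) \in \Int(F_H)$, so the edge support of the flow $\bmu_n(f)$ equals $EH$. Since distinct components of $H$ share no vertices, the sets $I_i := \{x \in I_{ap} \mid \bs_n^f(x) \in VK_i\}$ partition $I$ up to measure zero. The key point is that each $I_i$ is $f$-invariant a.e.: for $x \in I_i$, the edge $\bs_{n+1}^f(x)$ belongs to the support $EH$ and has head $\bs_n^f(x) \in VK_i$, so it lies in $K_i$, and hence its tail $\bs_n^f(f(x))$ also lies in $VK_i$. Setting $t_i = \Leb(I_i)$, each $t_i > 0$ because $\mu \in \Int(F_H)$ places positive mass on every edge of $K_i$. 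Rescaling $I_i$ affinely to $[0,1]$ and conjugating $f|_{I_i}$ by this rescaling produces $f_i \in \AMP$ whose order-pattern distribution is the normalized restriction of $\bmu_n(f)$ to the edges of $K_i$, so $\bmu_n(f_i) \in \Int(F_{K_i})$.

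The main obstacle is making the forward direction measure-theoretically clean: one must verify that $\{I_i\}$ is measurable and $f$-invariant modulo null sets (which reduces to measurability of $\bs_n^f$ and the combinatorial fact that edges in $EH$ respect the decomposition into components), that each $t_i > 0$ (from positivity of $\mu$ on each edge of $K_i$), and that the rescaled restriction inherits both measure preservation and almost aperiodicity from $f$. Everything else is a routine translation between flows on the digraph and distributions of iterate order patterns.
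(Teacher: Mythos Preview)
Your proof is correct and follows essentially the same approach as the paper's (restrict-and-rescale for the forward direction, convexity via the block sum for the reverse); the paper is even terser, simply calling the restriction a ``scaled up version'' of $f|_{I_K}$ and not spelling out the $f$-invariance you note. One caveat: the sets $I_i$ need not be intervals, so ``rescaling $I_i$ affinely to $[0,1]$'' should be replaced by the order-preserving measure isomorphism $x \mapsto \Leb(I_i \cap [0,x])/\Leb(I_i)$, which preserves both Lebesgue measure (after normalization) and order patterns.
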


\begin{proof}
Suppose $f\in\AMP$ and $\bmu_n(f)=\mu\in\Int(F_H)$.
Let $K$ be a connected component of $H$, and let 
$I_K=\{x\in I \mid \bs_n^f(x)\in VK\}$.
Note that $\Leb(I_K)\ne 0$; defining $g$ to be a scaled up version
of $f|_{I_K}$ so that $g:I\to I$, we have $\bmu_n(g)\in\Int(F_K)$.

The converse implication follows from the convexity of $\bmu_n(\AMP)$.
\end{proof}

We now prove Theorem \ref{main}.

\begin{proof}[Proof of main theorem]
To prove (1), let $\Int(F_H)$ denote the open face $F_H$.
We will show that if $\Int(F_H)\cap\bmu_n(\AMP)$ is nonempty then
for each vertex of $F_H$ there are points of $\Int(F_H)\cap\bmu_n(\AMP)$
arbitrarily close to $v$.  By convexity of $\mu_n(\AMP)$ it follows that
$\Int(F_H)\subset\bmu_n(\AMP)$, thus proving (1).

Suppose $\Int(F_H)\cap\bmu_n(\AMP)$ is nonempty.  If $H$ is
connected, then by Lemma \ref{driftlessface}
$H$ is driftless, and by Lemma \ref{driftlessloop} there is a totally driftless loop
$\g$ with support $H$.
Let $v$ be a vertex of $F_H$ and let $\beta$ be an embedded loop in $H$ such that 
$v=F_\beta$.  By Lemma \ref{concat} the loop $\beta^N\g$ is driftless, so by 
Lemma \ref{functions} there is $f\in\AMP$ with $\bmu_n(f)$ equal to the counting
measure on the loop $\beta^N\g$.  As $N$ grows this sequence of measures
approaches $v$.

If $H$ is not connected, then by Lemma \ref{disconnected}, for each connected
component $K$ of $H$ there is $f_K\in\AMP$ with $\bmu(f_K)\in\Int(F_K)$.  We apply
the argument from the previous paragraph to each face $F_K$, obtaining points
of $\bmu_n(\AMP)$ close to the vertices of $F_K$.  As each vertex of $F_H$ is
a vertex of one of the $F_K$'s, we are done.

As for (2), by (1) we know that $\Int(H)\subset\bmu_n(\AMP)$ if and only if
$\Int(H)\cap\bmu_n(\AMP)\ne\emptyset$.  If $H$ is connected, Lemma \ref{driftlessface}
finishes it.  If $H$ is not connected, then for any connected component $K$ of $H$
we have $\Int(F_K)\cap\bmu_n(\AMP)\ne\emptyset$ if and only if $K$ is driftless.
So by Lemma \ref{disconnected}, $\Int(H)\subset\bmu_n(\AMP)$ if and only if each
$K$ is driftless, i.e., if and only if $H$ is driftless.

To prove (3), it suffices to show that $\Int(P_n)\subset \bmu_n(\AMP)$.  This is
easy:  as $\bmu_n(\AMP)$ is not empty, there must exist a (connected) driftless
face subgraph.  By Corollary \ref{enlarge}, the whole graph $G_{n-1}$ is driftless.  
Since $\Int(P_n)=F_{G_{n-1}}$, the result is implied by (2).
\end{proof}

\begin{cor}\label{uniformdist}
For each $n$, there exists $f\in\AMP$ such that $\bmu_n(f)$ is uniform on $S_n$.
\end{cor}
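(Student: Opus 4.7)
My plan is to deduce this corollary directly from Theorem \ref{main}(3) by showing that the uniform distribution on $S_n$ lies in the interior of the polytope $P_n$.

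First, I would verify that the uniform distribution $\mu_{\text{unif}}$ on $S_n$ (which assigns mass $1/n!$ to every $\sigma \in S_n$) is an element of $P_n$. Regarded as an edge weighting on $G_{n-1}$, this is the constant weighting with value $1/n!$ on every edge. To check it is a flow, I would count the in-degree and out-degree of each vertex $\tau \in VG_{n-1} = S_{n-1}$: the number of $\sigma \in S_n$ with $\rho(\sigma) = \tau$ equals $n$ (one for each choice of $\sigma(n)$ relative to the values of $\sigma|_{[n-1]}$), and similarly $|\{\sigma \mid \rho'(\sigma) = \tau\}| = n$. Hence the total inflow and outflow at every vertex both equal $n/n!$, confirming that $\mu_{\text{unif}} \in P_n$.

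Next, since every edge of $G_{n-1}$ receives positive weight under $\mu_{\text{unif}}$, the support of $\mu_{\text{unif}}$ is all of $G_{n-1}$, and thus $\mu_{\text{unif}}$ lies in the relative interior $\Int(F_{G_{n-1}}) = \Int(P_n)$.

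Finally, I would invoke Theorem \ref{main}(3), whose proof establishes $\Int(P_n) \subset \bmu_n(\AMP)$ by using Corollary \ref{enlarge} to conclude that $G_{n-1}$ itself is driftless. Applying this inclusion to $\mu_{\text{unif}}$ produces the desired $f \in \AMP$ with $\bmu_n(f) = \mu_{\text{unif}}$. There is no real obstacle here: the corollary is essentially a bookkeeping consequence of the main theorem, and the only substantive verification is the degree count showing that uniform edge weights form a flow.
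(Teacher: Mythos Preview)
Your proposal is correct and matches the paper's intended argument: the corollary is stated without proof in the paper, precisely because it follows immediately from Theorem~\ref{main} once one observes that the uniform distribution lies in $\Int(P_n)$. Your degree-count verification that the uniform weighting is a flow, together with the observation that its support is all of $G_{n-1}$, is exactly the missing bookkeeping.
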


\begin{rem}
We have answered Question 1 for $\C=\AMP$.  However Question 2 remains open.
In particular, we do not know if there is $f\in\AMP$ such that $\bmu_n(f)$ is uniform 
for all $n$.  See Section 8.
\end{rem}
%

\section{Entropy and finite exclusion type}

In this section we change our focus from the distribution $\bmu_n(f)$ to a coarser
statistic, namely the number of permutations of length $n$ realized by $f$.
We relate two notions about a continuous piecewise monotone function $f$:  
finite entropy and finite exclusion type.  
The basic idea is that these two concepts imply opposite things for the number of length $N$
permutations realized by iterates of $f$ as $N$ gets large.  Roughly speaking, finite entropy 
implies that the number of permutations realized by $f$ grows (at most) exponentially 
in the length.  On the other hand, finite exclusion type means 
that the only restrictions on the permutations realized by $f$ are given by looking at 
permutations of a fixed finite length.  Often, this will imply that the number of 
realizable permutations in $S_N$ grows super-exponentially in $N$.

Define $\bs_n (f)$ to be the image of $\bs_n^f$ in $S_n$.

\subsection*{Continuous functions and entropy}

For (piecewise) continuous functions, several classical definitions of the topological 
entropy $h(f)$ are possible.  The reader is referred to \cite{walters} for details.
A new notion of entropy called \emph{topological permutation entropy} has been 
studied recently by several people;
the following combines Theorem 1 of \cite{bkp} with Theorem 2.1 of \cite{m}.

\begin{thm}\label{entropy}
If $f:I\rightarrow I$ is piecewise continuous and piecewise monotone then 
$h(f)=\lim_{n\rightarrow\infty}\frac{1}{n-1}\log(|\bs_n(f)|)$ and $h(f)$ is finite.
\end{thm}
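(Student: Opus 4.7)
The plan is to combine the Misiurewicz--Szlenk characterization of topological entropy for piecewise monotone interval maps with a comparison between the realized pattern count $|\bs_n(f)|$ and the lap numbers $c(f^j)$ of the iterates, where $c(g)$ denotes the number of maximal intervals of monotonicity of a piecewise monotone $g$.

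First, to establish that $h(f)$ is finite, I would use the fact that if $f$ has $k$ monotone pieces, then $c(f^n) \leq k^n$ by a straightforward induction on $n$ (each monotone branch of $f^{n-1}$ splits into at most $k$ monotone branches when composed with $f$). Combined with the classical formula $h(f) = \lim_{n\to\infty} \frac{1}{n}\log c(f^n)$, valid for piecewise continuous, piecewise monotone maps by \cite{m}, this immediately gives $h(f) \leq \log k < \infty$.

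For the equality, I would sandwich $|\bs_n(f)|$ between quantities with the same exponential growth rate as $c(f^{n-1})$. For the upper bound, I form the common refinement $\mathcal{P}_n$ of the monotonicity partitions of $f^0, f^1, \ldots, f^{n-1}$. On each piece of $\mathcal{P}_n$, the map $x \mapsto \bs_n^f(x)$ is locally constant: any two coordinate functions $f^i$ and $f^j$ are jointly monotone on such a piece and hence the sign of $f^i(x) - f^j(x)$ can change only at isolated points, which can be absorbed into a further refinement at polynomial cost. This yields $|\bs_n(f)| \leq |\mathcal{P}_n| \leq n \cdot c(f^{n-1})$, so $\limsup_{n\to\infty} \frac{1}{n-1}\log |\bs_n(f)| \leq h(f)$.

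For the lower bound, I would argue that each maximal interval of joint monotonicity of $f^0, \ldots, f^{n-1}$ contributes a distinct order pattern, up to a subexponential correction: each such interval determines a signed itinerary through the monotonicity pieces of $f$, and from the resulting order pattern one can reconstruct this itinerary in all but polynomially many cases. The main obstacle, and indeed the technical heart of Theorem~1 of \cite{bkp}, is controlling the pattern collisions between distinct itineraries; this requires a careful case analysis using the piecewise monotone structure of $f$. Once this is in hand, the inequality $\liminf \frac{1}{n-1}\log |\bs_n(f)| \geq \lim \frac{1}{n-1}\log c(f^{n-1}) = h(f)$ follows, completing the proof when combined with the upper bound.
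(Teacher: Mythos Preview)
The paper does not give a proof of this theorem at all: it is stated as a known result, introduced with the sentence ``the following combines Theorem~1 of \cite{bkp} with Theorem~2.1 of \cite{m}.''  There is nothing further to compare your argument to on the paper's side.

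Your sketch is a reasonable outline of how those cited papers proceed (lap-number formula plus a two-sided comparison between lap counts and realized-pattern counts), and the overall strategy is correct.  A couple of the specific quantitative claims are stated more sharply than you have actually justified: for instance, the bound $|\mathcal P_n|\le n\cdot c(f^{n-1})$ is not literally what comes out of your refinement argument (you need roughly $\binom{n}{2}$ extra cuts, one per pair $(i,j)$, each controlled by $c(f^i)+c(f^j)$), though any polynomial-in-$n$ correction suffices for the limit.  For the lower bound you essentially defer to \cite{bkp}, which is exactly what the paper does.  So your proposal is not so much an independent proof as an expanded pointer to the same external sources; for the purposes of this paper that is all that is required.
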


\subsection*{Finite exclusion type}

\begin{dfn}
A function $f\in\A$ has {\em exclusion type $n$} if
there exists $H\subseteq G_n$ such that $\bs_m (f) = \pi_{m,n}^{-1}(\Path_{m-n} H)$
for all $m\geq n$ and \emph{finite exclusion type} if it has exclusion type $n$
for some $n$.
\end{dfn}

Note that this says not only that every path in $G_n$ realized by $f$ is supported
on $H$, but also that every lift of every path supported on $H$ is realized by $f$.
A condition equivalent to finite exclusion type is that there are finitely many
\emph{basic forbidden patterns} for $f$, in the language of \cite{sergi2}.  This means
that there are finitely many permutations $\s_1,\dots,\s_k$ such that any permutation 
$\s$ (of any length $m$) either occurs as $\bs_m^f(x)$ for some $x$ or else satisfies 
$\Order(\s|_J)=\s_i$ for some interval $J\subset[m]$ and some $i$.  Elizalde has 
proposed the problem of characterizing those functions which have finite exclusion
type.  We will give a necessary condition.

\begin{thm}\label{partiallydriftlessthm}
Suppose $f$ has finite exclusion type $n$, and let $H\subset G_n$ be the associated
subgraph.  If $H$ contains a partially driftless loop then $|\bs_N(f)|$ grows super-exponentially;
i.e., for any $c\in\R$, we have 
$$|\bs_N(f)| > c^N \qquad \mbox{ for sufficiently large }N.$$
\end{thm}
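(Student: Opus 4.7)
The plan is to exhibit a single long path in $H$ whose number of lifts to $S_m$ grows factorially in the path length, then invoke finite exclusion type to realize all those lifts as order patterns of $f$. Let $\g\subset H$ be a partially driftless loop of length $\ell$ based at a vertex $v$, with $\Drift_\g(j_0)=0$ for a fixed $j_0\in[n]$; iterating Lemma \ref{partial} yields $\Drift_{\g^m}(j_0)=0$ for every $m\geq 1$. The central combinatorial claim is that the $k+1$ elements
$$x_{j_0},\ x_{\ell+j_0},\ x_{2\ell+j_0},\ \ldots,\ x_{k\ell+j_0}$$
of $Q_{\g^k}$ are pairwise incomparable. Granting this, Corollary \ref{factorialgrowth} delivers at least $(k+1)!$ lifts $\s\in S_m\cap\pi_n^{-1}(\g^k)$ with $m=k\ell+n$; since $\g^k\in\Path_{k\ell}(H)$, the finite exclusion type hypothesis forces every such $\s$ into $\bs_m(f)$, so $|\bs_m(f)|\geq (k+1)!$.

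To establish pairwise incomparability, fix $0\leq a<b\leq k$ and suppose for contradiction that $x_{a\ell+j_0}\leq x_{b\ell+j_0}$ in $Q_{\g^k}$. The element $(v,a\ell,j_0)$ is a representative of $x_{a\ell+j_0}$ (taking middle coordinate $c=j_0\in[1,n]$), and similarly $(v,b\ell,j_0)\in x_{b\ell+j_0}$. By prepending and appending $\sim$-chains between representatives to any witnessing chain, one obtains a chain from $(v,a\ell,j_0)$ to $(v,b\ell,j_0)$. Since the proof of Lemma \ref{monotonic} preserves chain endpoints while eliminating backtracks, we may assume the chain is monotonic. Now $\psi(v)+a\ell=a\ell$ and $\psi(v)+b\ell=b\ell$, so monotonicity confines every entry $(y_r,a_r,c_r)$ to the range $\psi(y_r)+a_r\in[a\ell,b\ell]$, and hence $a_r\in[a\ell,b\ell]$. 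The chain therefore lies entirely in the sub-region of $\overline{Q}_{\g^k}$ indexed by the subpath of $\g^k$ occupying positions $a\ell$ through $b\ell$; because $\g$ is a loop based at $v$, this subpath is a copy of $\g^{b-a}$. Shifting the second coordinates by $-a\ell$ (which preserves the equivalences (i)--(ii) and the inequalities (iii)--(iv), since vertices and edges match those of $\g^{b-a}$) yields a monotonic chain in $\overline{Q}_{\g^{b-a}}$ witnessing $x_{j_0}\leq x_{(b-a)\ell+j_0}$, contradicting $\Drift_{\g^{b-a}}(j_0)=0$. The opposite inequality is ruled out by the symmetric argument.

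For the super-exponential bound, the iterated restriction map $\rho^{N-m}:\bs_N(f)\to\bs_m(f)$ is surjective for $N\geq m$ (any $\s\in\bs_m(f)$ realized at some $x$ is the $\rho^{N-m}$-image of $\bs_N^f(x)$), so $|\bs_N(f)|\geq|\bs_m(f)|$. Taking $k=\lfloor(N-n)/\ell\rfloor$ gives $|\bs_N(f)|\geq(k+1)!\geq\lfloor N/\ell\rfloor!$, which by Stirling surpasses $c^N$ for any fixed $c$ once $N$ is large. The \emph{main obstacle} is the incomparability claim, in particular the careful choice of the distinguished representatives $(v,a\ell,j_0)$ and $(v,b\ell,j_0)$ whose $\psi+a$ values coincide with the subpath endpoints and thereby trap the monotonic chain inside the copy of $\g^{b-a}$; once this step is secured, Corollary \ref{factorialgrowth} and the finite exclusion hypothesis close the argument.
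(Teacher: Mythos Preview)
Your proof is correct and follows the same approach as the paper: iterate the partially driftless loop, exhibit an antichain in $Q_{\g^k}$ to count lifts via Corollary~\ref{factorialgrowth}, and invoke finite exclusion type. The paper's version is terser---it simply cites Lemma~\ref{partial} for the pairwise incomparability and leaves the passage to general $N$ implicit---whereas your monotonic-chain argument with the specific representatives $(v,a\ell,j_0)$ and $(v,b\ell,j_0)$ supplies precisely the detail behind that citation (Lemma~\ref{partial} literally only gives incomparability of $x_{j_0}$ and $x_{m\ell+j_0}$ in $Q_{\g^m}$, not of intermediate pairs in $Q_{\g^k}$).
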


\begin{proof}
Let $\g$ be a loop on $H$ with $\Drift_\g(j)=0$ for some particular $j\in[n]$.
Let $\ell$ be the length of $\g$, and set $m_k=k\ell+n$.  By the hypothesis of
finite exclusion type we have 
$$\s_{m_k}(f)=\pi_{m_k,n}^{-1}(\Path_{k\ell}(H))\supset \pi_{m_k,n}^{-1}(\g^k).$$
Now since $\Drift_\g(j)=0$, the $i$ elements $x_j,x_{\ell+j},\dots,x_{(k-1)\ell+j}$ 
of the poset $Q_{\g^k}$ are pairwise incomparable, by Lemma \ref{partial}.
Thus the number of lifts of $\g^k$ to $S_{m_k}$ is at least $k!$, by Corollary 
\ref{factorialgrowth}.  Therefore $|\bs_{k\ell+n}(f)|\geq k!$ for all $k$ and the 
result follows.
\end{proof}

\begin{rem} 
Elizalde and Liu \cite{sergi3} have shown that there is no piecewise monotonic 
function $f:I\to I$ of finite exclusion type with associated graph $H\subset G_2$ 
where $EH=\{(123), (321), (213), (312)\}$.  This does not follow from the preceding
theorem, as this $H$ contains no partially driftless loop.
\end{rem}

For a given function $f$, denote by $H_n(f)$ the subgraph of $G_n$ with edge
set $\bs_{n+1}(f)$.

\begin{thm}\label{ergodic}
If $f:I\to I$ is ergodic then for every $n$, $H_n(f)$ contains a partially driftless loop.  
\end{thm}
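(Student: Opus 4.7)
The plan is to recognize that ergodicity places $\bmu_{n+1}(f)$ in the relative interior of the face of $P_{n+1}$ corresponding to the positive-measure subgraph $H'\subseteq H_n(f)$, and then to deploy Lemma \ref{driftlessface} together with Proposition \ref{driftlessloop} to extract a totally driftless (hence partially driftless) loop supported in $H'$. Ergodicity enters only to locate $\bmu_{n+1}(f)$ inside the appropriate open face and to establish strong connectivity of $H'$.

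First I would let $\lambda$ denote the non-atomic invariant measure with respect to which $f$ is ergodic (Lebesgue by default, or any invariant non-atomic measure, using Remark \ref{othermeasures}). Define $H'\subseteq G_n$ as the subgraph whose edge set is $\{\sigma\in S_{n+1}:\lambda\{x:\bs_{n+1}^f(x)=\sigma\}>0\}$, together with the corresponding endpoints. Then $H'\subseteq H_n(f)$, and $\bmu_{n+1}(f)\in P_{n+1}$ lies in the relative interior $\Int(F_{H'})$ of the face of $P_{n+1}$ corresponding to $H'$, since $\bmu_{n+1}(f)$ carries strictly positive mass on every edge of $H'$ and zero mass outside.

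Next I would verify that $H'$ is strongly connected, and therefore a connected face subgraph. The set of orbits that ever traverse a $\lambda$-measure-zero edge is a countable union of $\lambda$-null sets, hence null, so $\lambda$-a.e. orbit stays inside $H'$. By the Birkhoff ergodic theorem, any such generic orbit visits each vertex $v$ with $\lambda(I_v)>0$ with positive frequency $\lambda(I_v)$, and so infinitely often. For any two vertices $v_1,v_2\in VH'$, a segment of such an orbit from a visit to $v_1$ up to its next visit to $v_2$ provides a path in $H'$, yielding strong connectivity.

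Finally, I would invoke the Balayage lemma: since $\bmu_{n+1}(f)\in\Int(F_{H'})\cap\bmu_{n+1}(\AMP)$, Lemma \ref{driftlessface} forces $H'$ to be driftless. Proposition \ref{driftlessloop} then supplies a totally driftless loop $\gamma$ with support $H'$; in particular $\Drift_\gamma(i)=0$ for every $i\in[n]$, so $\gamma$ is partially driftless, and $\gamma\subseteq H'\subseteq H_n(f)$. The main subtlety, and the step where care is needed, is the distinction between the purely combinatorial image $H_n(f)=\bs_{n+1}(f)$ and the positive-measure support $H'$: measure-zero strays in $H_n(f)\setminus H'$ are invisible to ergodic reasoning and would spoil the Balayage contradiction if included, but since they are never visited by typical orbits they are harmless, and the loop $\gamma$ we produce sits inside $H'$ and hence inside $H_n(f)$.
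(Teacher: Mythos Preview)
Your argument is correct and takes a genuinely different route from the paper's. The paper gives a direct construction: it defines an auxiliary graph on $VG_n$ whose edges record ``near-return events'' $(x,N)$ (times $N>n$ at which $f^N(x)$ is at least as close to $x$ as every other iterate $f^m(x)$ with $1\le m\le N+n$), observes that any directed cycle in this auxiliary graph yields a partially driftless loop in $H_n(f)$, and then uses equidistribution of a generic orbit to exhibit an infinite path in the auxiliary graph (hence a cycle, by finiteness of the vertex set). You instead feed the problem back into the machinery of Section~\ref{mainsec}: locate $\bmu_{n+1}(f)$ in $\Int(F_{H'})$ for the positive-measure support $H'$, use ergodicity only to establish strong connectivity of $H'$, and then invoke Lemma~\ref{driftlessface} and Proposition~\ref{driftlessloop}. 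Your approach is cleaner and more conceptual, and in fact delivers a stronger conclusion (a \emph{totally} driftless loop); the paper's is more self-contained and does not rely on the Balayage argument. One small point worth making explicit in your write-up: you are implicitly using that ergodicity with respect to a non-atomic invariant measure forces $f$ to be almost-everywhere aperiodic, so that $f\in\AMP$ (or its $\lambda$-analogue via Remark~\ref{othermeasures}) and Lemma~\ref{driftlessface} is applicable.
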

\begin{proof}  
Consider the graph $H$ with vertex set $VH=VG_n$, edge set 
$EH=\{(x,N)\in I\times \N\mid N>n, \forall 1\leq m\leq N+n, d(f^m(x),x)\geq d(f^N(x),x)\}$ 
and head and tail maps the restrictions to the initial and final segments of $\bs_N^f(x)$.  
Note that any directed cycle in $H$ yields a partially driftless loop in $H_n(f)$ and that $H$ 
has finitely many vertices so it suffices to construct an infinite path in $H$.  

Consider $J=\{x\in I\mid\forall y\in I, \epsilon>0, 
\lim_{n\rightarrow \infty}\frac{|\{m<n|d(f^m(x),y)<\epsilon\}|}{n}\in(\frac{\epsilon}{2},4\epsilon)\}$.
By the compactness of $I$ and ergodicity of $f$, $\Leb(J)=\Leb(I)=1$.  
Since $f(J)\subseteq J$ there will be an infinite path in $H$ if $J\subseteq\pi_1(EH)$;
this is shown next.  For any $x\in J$ choose $\epsilon<\frac{1}{4n}$ so that if $1\leq m\leq n$ 
then $d(f^m(x),x)>\epsilon$.  Choose $r>n$ with $d(f^r(x),x)<\epsilon$.  Choose $N>r>n$ with 
$d(f^m(x),x)\geq d(f^N(x),x)$ for every $1\leq m\leq N+n$ (so that $(x,N)\in EH$). 
Such an $N$ exists since there is always eventually another sequence of length $n$ 
avoiding the $\epsilon$ ball around $x$.  
\end{proof}

\begin{thm}\label{periodic}
If $f:I\to I$ is piecewise continuous and if $x_0$ is a periodic point of period $p>n$
such that $f$ is continuous at every iterate of $x_0$, then $H_n(f)$ 
contains a partially driftless loop.  
\end{thm}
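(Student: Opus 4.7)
The plan is to build a loop $\gamma$ in $H_n(f)$ from the orbit of $x_0$, and then show it is partially driftless by producing two lifts of $\gamma$ that disagree on the comparison of positions $1$ and $p+1$. Write $y_i=f^i(x_0)$, so $y_{i+p}=y_i$ and (since $p$ is the minimal period) $y_0,\ldots,y_{p-1}$ are distinct. Set $\tau_i=\Order(y_i,\ldots,y_{i+n-1})\in S_n$ and $\sigma_i=\Order(y_i,\ldots,y_{i+n})\in S_{n+1}$ (indices mod $p$); since $p>n$, any window of $n{+}1$ consecutive indices consists of distinct residues mod $p$, so these permutations are well-defined. The $\sigma_i$ form the edges of a loop $\gamma$ of length $p$ in $G_n$ passing through $\tau_0,\tau_1,\ldots,\tau_{p-1}$ and returning to $\tau_0$. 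To see that each $\sigma_i$ actually lies in $\bs_{n+1}(f)$, pick an aperiodic $z$ sufficiently close to $y_i$ (possible because $I_{ap}$ has full measure); by continuity of $f$ at each of $y_i,\ldots,y_{i+n-1}$, the iterates $z,f(z),\ldots,f^n(z)$ land in small disjoint neighborhoods of the distinct points $y_i,\ldots,y_{i+n}$, so $\bs_{n+1}^f(z)=\sigma_i$.

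The heart of the proof is to show $\Drift_\gamma(1)=0$. Choose an aperiodic $x$ so close to $x_0$ that $f^p(x)$ is \emph{strictly} the closest iterate to $x$ among $f^1(x),\ldots,f^{p+n}(x)$. Such an $x$ exists because $f^p$ is continuous at $x_0$ (as a composition of $f$ at its orbit points), so $f^p(x)\to x_0$ as $x\to x_0$, whereas $f^m(x)\to y_{m\bmod p}\neq x_0$ for each $m\in[1,p+n]\setminus\{p\}$. The permutation $\sigma:=\bs_{p+n}^f(x)\in S_{p+n}$ is then a lift of $\gamma$. Define $\sigma'$ by $\sigma'(1)=\sigma(p+1)$, $\sigma'(p+1)=\sigma(1)$, and $\sigma'(i)=\sigma(i)$ otherwise; I claim $\sigma'$ is also a lift of $\gamma$. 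Since $p>n$, no window of length $n{+}1$ (corresponding to an edge of $\gamma$) can contain both positions $1$ and $p+1$, so the swap changes at most one entry in any such window. Say $f^p(x)>x$ (the other case is symmetric); the closest-iterate inequality $|f^m(x)-x|>|f^p(x)-x|$ forces every $f^m(x)$ with $m\in[1,p+n]\setminus\{p\}$ to satisfy either $f^m(x)>f^p(x)$ or $f^m(x)<2x-f^p(x)<x$, so $f^m(x)$ lies on the same side of both $x$ and $f^p(x)$. Consequently replacing $x$ by $f^p(x)$ (or vice versa) in any single window does not change the rank of the affected slot, so every edge and vertex window order pattern is preserved, and $\pi_n(\sigma')=\pi_n(\sigma)=\gamma$.

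Since $\sigma$ and $\sigma'$ are two lifts of $\gamma$ giving opposite orderings of positions $1$ and $p+1$, Corollary~\ref{driftlesscoord} shows that $x_1$ and $x_{p+1}$ are incomparable in $Q_\gamma$, so $\Drift_\gamma(1)=0$ and $\gamma$ is partially driftless. The main technical obstacle is guaranteeing the availability of an aperiodic $x$ whose $p$-th iterate is \emph{strictly} the closest among its first $p+n$ iterates; this is the analog in the periodic setting of the ``nearest return'' device used in Theorem~\ref{ergodic}, and it is ensured here by the distinctness of the orbit points $y_0,\ldots,y_{p-1}$ combined with continuity of $f$ at each of them.
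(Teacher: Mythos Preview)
Your proof is correct and follows the same approach as the paper: use continuity along the periodic orbit to find an aperiodic $x$ near $x_0$ whose $p$th iterate is the unique closest return among the first $p+n$ iterates, so that $\bs_{p+n}^f(x)$ projects to a loop in $H_n(f)$ that is partially driftless at the first coordinate. The paper's argument is much terser—it records the $\epsilon$-ball conditions and simply asserts the conclusion—whereas you supply the mechanism (swapping positions $1$ and $p{+}1$ and invoking Corollary~\ref{driftlesscoord}) that the paper leaves implicit; note only the cosmetic slip that the relevant iterates run through $f^{p+n-1}(x)$ rather than $f^{p+n}(x)$, and that you should also take $x$ close enough that every length-$(n{+}1)$ window of $\sigma$ already matches the corresponding edge $\sigma_i$, so that $\sigma$ genuinely lifts $\gamma$.
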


\begin{proof}
Using continuity, choose
$\e>0$ so that for any $x\in I_{ap}$ within $\e$ of $x_0$, the balls $B_\e(f^i(x))$ are 
pairwise disjoint for $i=0,\ldots,p-1$ and the iterates satisfy $|f^i(x)-f^{p+i}(x)|<\e$ 
for $0\leq i \leq n-1$.  Then the image in 
$G_n$ of $\bs_{p+n}^f(x)$ is a partially driftless loop.  
\end{proof}

\begin{cor}\label{entropyresult}
If $f:I\to I$ is piecewise continuous and piecewise monotonic and either 
\begin{itemize}
\item
$f$ is ergodic on a subinterval of $I$, or 
\item
$f$ has arbitrarily large finite orbits on which it is continuous,
\end{itemize}
then $f$ does not have finite exclusion type.  
\end{cor}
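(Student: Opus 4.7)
\emph{Proof proposal.} The plan is to argue by contradiction. Suppose $f$ has finite exclusion type $n$, with associated subgraph $H \subseteq G_n$. I aim to combine the super-exponential lower bound on $|\bs_N(f)|$ from Theorem \ref{partiallydriftlessthm} (available whenever $H$ contains a partially driftless loop) with the exponential upper bound coming from finite topological entropy (Theorem \ref{entropy}, which applies because $f$ is piecewise continuous and piecewise monotone) to derive the contradiction.

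As a first step I would identify the exclusion subgraph $H$ with the ``observed'' subgraph $H_n(f)$ whose edges are $\bs_{n+1}(f)$. Writing out $\bs_{n+1}(f) = \pi_{n+1,n}^{-1}(\Path_1 H)$ and noting that a vertex $v \in S_{n+1}$ maps under $\pi_{n+1,n}$ to the length-one path $(\rho(v), v, \rho'(v))$ in $G_n$, one sees $EH_n(f) = EH$, so finding a partially driftless loop in $H$ is equivalent to finding one in $H_n(f)$.

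The main content is producing such a loop under either hypothesis. In the periodic case, given $n$ I would choose a periodic orbit of period $p > n$ at which $f$ is continuous and apply Theorem \ref{periodic} verbatim. In the ergodic case the hypothesis is formally weaker than that of Theorem \ref{ergodic}, since ergodicity is only assumed on a subinterval $J \subseteq I$. I would reduce to the full-interval case by restricting $f$ to $J$ (which is forward invariant up to a null set by ergodicity) and linearly rescaling to obtain $\tilde f : I \to I$ that is piecewise continuous, piecewise monotone, and ergodic on all of $I$. Because order patterns depend only on the relative order of real numbers, rescaling is harmless, so $\bs_m(\tilde f) \subseteq \bs_m(f)$ and hence $H_n(\tilde f) \subseteq H_n(f)$; Theorem \ref{ergodic} applied to $\tilde f$ then yields a partially driftless loop in $H_n(\tilde f)$, which automatically sits inside $H_n(f) = H$.

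Combining these steps, Theorem \ref{partiallydriftlessthm} gives $|\bs_N(f)|$ growing faster than any exponential in $N$, while Theorem \ref{entropy} gives $|\bs_N(f)| \leq e^{(N-1)(h(f)+\epsilon)}$ for large $N$ with $h(f) < \infty$, a contradiction. The main obstacle is the ergodic reduction, where one must carefully verify that the rescaled $\tilde f$ inherits ergodicity on all of $I$ and that the order-pattern subgraph of $\tilde f$ really does sit inside $H$; once this is in place the rest is a direct invocation of results already established in the paper.
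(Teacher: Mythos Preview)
Your proposal is correct and follows exactly the route the paper intends: the corollary is stated without proof as an immediate consequence of Theorems~\ref{entropy}, \ref{partiallydriftlessthm}, \ref{ergodic}, and \ref{periodic}, and you have simply spelled out the combination. Your verification that $EH=\bs_{n+1}(f)=EH_n(f)$ (so that a partially driftless loop in $H_n(f)$ lies in $H$) and your rescaling argument reducing ``ergodic on a subinterval'' to the hypothesis of Theorem~\ref{ergodic} are precisely the details the paper leaves to the reader.
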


Recall that by Sarkovskii's Theorem \cite{sarkovskii}, a continuous function has points 
of arbitrarily large period as long as there is a periodic point whose period is not 
a power of 2.

\section{Proof of Theorem \ref{cantor}}
\label{cantorproof}

We now give the promised proof of Theorem \ref{cantor}.  Given
$\mu=(\mu_1,\mu_2,\ldots)$ we will construct $f$ with $\bmu(f)=\mu$.
Our construction will involve several layers of Cantor sets, and the
resulting functions will be nowhere near continuous or measure preserving.

Recall that $\rho(\s)=\Order(\s|_{[n-1]})$ if $\s\in S_n$.

\begin{lem}  \label{l:I's}  Given $\mu \in \D_{\infty}$, there exist intervals 
$\{ I_{\s} \subset  (\frac{1}{4}, \frac{3}{4}]\}_{\s \in \cup_n  S_n}$, open at the 
left endpoint and closed at the right endpoint, with the properties that:
\begin{enumerate}
\item[(i)] $I_{\s} \cap  I_{\tau} = \emptyset $ for all $\s \not= \tau \in S_n$,
\item[(ii)] $I_ \s \subset I _{\rho (\s) }$,
\item[(iii)] $\Leb(I_{\s} )= \frac{1}{2} \mu _n  ( \s )$ for all $\s \in S_n$,
\item[(iv)] for each $n$, $\cup_{\s\in S_n} I_\s = (\frac 14,\frac 34]$.
\end{enumerate}
\end{lem}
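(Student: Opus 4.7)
The plan is to construct the intervals by induction on $n$, using the compatibility of the $\mu_n$ to match measures level by level. For the base case $n=1$, set $I_{(1)}=(1/4,3/4]$; since $\mu_1((1))=1$, all four properties hold trivially.

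For the inductive step, assume the $I_\sigma$ have been defined for all $\sigma\in S_k$ with $k\le n$, satisfying (i)--(iv). For each $\sigma\in S_n$, I will partition $I_\sigma=(a_\sigma,b_\sigma]$ into consecutive half-open subintervals indexed by the fiber $\rho^{-1}(\sigma)\subset S_{n+1}$ (which has $n+1$ elements). Explicitly, fix an ordering $\sigma'_1,\ldots,\sigma'_{n+1}$ of $\rho^{-1}(\sigma)$, set $c_0=a_\sigma$ and
$$c_j = c_{j-1} + \tfrac{1}{2}\mu_{n+1}(\sigma'_j),$$
and define $I_{\sigma'_j}=(c_{j-1},c_j]$. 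The compatibility identity $\mu_n(\sigma)=\sum_{\rho(\sigma')=\sigma}\mu_{n+1}(\sigma')$ forces $c_{n+1}=a_\sigma+\tfrac{1}{2}\mu_n(\sigma)=b_\sigma$, so these subintervals exactly tile $I_\sigma$.

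Verifying the four properties is then routine bookkeeping: (iii) holds by construction; (ii) because $I_{\sigma'_j}\subset I_\sigma = I_{\rho(\sigma'_j)}$; (i) holds within a single fiber by the half-open convention on $(c_{j-1},c_j]$, and across distinct fibers by the inductive disjointness of the parents $I_\sigma$; (iv) because the tilings at level $n+1$ together exhaust $(1/4,3/4]=\bigcup_{\sigma\in S_n} I_\sigma$ by the inductive covering hypothesis. The only minor subtlety I foresee --- and it is not a genuine obstacle --- is that some $\sigma'_j$ may satisfy $\mu_{n+1}(\sigma'_j)=0$, producing a degenerate ``interval'' $(c_{j-1},c_{j-1}]=\emptyset$. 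Such empty sets remain trivially disjoint from everything and contained in the required parent, so they do no harm. Thus the entire argument reduces to a careful inductive subdivision, with compatibility of the sequence $\mu$ as the single nontrivial input.
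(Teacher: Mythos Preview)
Your proof is correct and follows essentially the same approach as the paper: both set $I_{(1)}=(\tfrac14,\tfrac34]$ and then inductively subdivide each $I_\tau$ into half-open pieces of length $\tfrac12\mu_n(\s)$ indexed by $\s\in\rho^{-1}(\tau)$, invoking compatibility to make the lengths add up. Your write-up is simply more explicit about the endpoints and the degenerate-interval edge case.
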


\begin{proof}
We define the $I_{\s}$ inductively as follows. First set $I_{(1)}=(\frac{1}{4}, \frac{3}{4}]$.  
Now let $n>1$ and assume that intervals  $I_{\tau}$ have been constructed for all 
$\tau\in S_{n-1}$.  Since $\mu_{n-1}(\tau)=\sum_{\s} \mu_n(\s)$, summed 
over all $\s\in S_n$ such that $\rho(\s)=\tau$, we may subdivide each  
$I_{\tau}$ into half-open intervals $I_{\s}$ of length $\frac{1}{2} \mu _n  ( \s )$.
\end{proof}

\begin{lem} \label{l:J's} 
There exist disjoint intervals $\{ J_\s \} _ {\s \in \cup_n S_n}$ such that for 
all compatible sequences $( \s _1, \dots \s _n)$, and any 
$(x_1, \dots , x_n)$ with $x_i \in J_{\s_i}$,  $\Order (x_1, \dots , x_n)= \s _n$.
\end{lem}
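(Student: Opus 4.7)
The plan is an inductive construction on $n = |\s|$, placing small closed intervals with widths that shrink fast enough to guarantee disjointness. First I would place $J_{(1)}$ as a small closed subinterval of $(0,1)$ centered at $1/2$. For the inductive step, suppose $J_\t$ has been placed for all $\t$ with $|\t| \le n$, satisfying the desired property, so that in particular for each $\t \in S_n$ the $n$ intervals $J_{\t^{(1)}}, \ldots, J_{\t^{(n)}}$ (where $\t^{(i)} = \rho^{n-i}(\t)$) appear on the real line in an order realizing $\t$.

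For each $\s \in S_{n+1}$, set $k = \s(n+1)$ and consider the parent chain $\s^{(1)}, \ldots, \s^{(n)} = \rho(\s)$. In order for $x_{n+1} \in J_\s$ to take rank $k$ among $x_1, \ldots, x_{n+1}$, the new interval $J_\s$ must fall in the $k$-th gap (counted from the left) between the sorted positions of $J_{\s^{(1)}}, \ldots, J_{\s^{(n)}}$, using the boundary of $(0,1)$ as the outermost endpoints. This target gap is a nonempty open interval. It may be partially obstructed by finitely many previously placed intervals $J_\t$ corresponding to $\t$'s not in $\s$'s compatible chain, but after removing them a union of open sub-intervals of positive total length remains. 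I would process the $\s \in S_{n+1}$ in some fixed order, placing each $J_\s$ as a closed interval of width $w_{n+1}$ inside a free portion of its target gap and avoiding all intervals placed so far (including level-$(n+1)$ intervals already chosen). Taking $w_n$ to decay super-factorially (e.g. $w_n = 1/(n!)^2$) ensures there is always room.

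For the verification, given a compatible $(\s_1, \ldots, \s_n)$ and any $x_i \in J_{\s_i}$, I induct on $i$. Assuming $x_1, \ldots, x_{i-1}$ already realize $\s_{i-1}$ on the real line, the construction placed $J_{\s_i}$ in the target gap dictated by $\s_i(i) = \s_n(i)$ relative to $J_{\s_1}, \ldots, J_{\s_{i-1}}$; hence $x_i$ has rank $\s_n(i)$ among $x_1, \ldots, x_i$. It follows that $\Order(x_1, \ldots, x_n) = \s_n$.

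The main obstacle is the width bookkeeping. The target gaps do shrink as the induction proceeds, because each new level places additional intervals inside them, and the number of competing intervals at level $n$ grows like $n!$. The key point is that the minimum length of any relevant sub-interval at each stage is some positive constant depending only on the finitely many placements made so far, so choosing $w_{n+1}$ strictly smaller than this minimum divided by $(n+1)!$ keeps the construction feasible indefinitely.
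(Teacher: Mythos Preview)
Your proposal is correct and takes essentially the same approach as the paper: an inductive construction that, for each $\s\in S_{n+1}$, places $J_\s$ in the gap dictated by $\s(n+1)$ relative to the intervals already assigned to the compatible chain, while keeping everything disjoint with positive gaps. The paper's version is simply terser about the bookkeeping: rather than tracking explicit widths $w_n$, it just carries along the inductive invariant that positive-length gaps remain between all placed intervals and at the endpoints, which (since only finitely many intervals are placed at each stage) is trivially maintainable---so your worry about super-factorial decay is unnecessary, and your final paragraph's adaptive choice of $w_{n+1}$ is exactly what the paper does implicitly.
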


\begin{proof}
Again the construction is inductive.  Suppose that the $J_\s$ have been 
constructed for $\s\in \cup_{n=1}^k S_n$, and assume further that gaps of 
positive lengths exist between these intervals and at both endpoints.  Order the 
permutations in $\s\in S_{k+1}$ arbitrarily, and for each such $\s$, let 
$J_\s$ be an arbitrary open interval disjoint from the previously chosen 
intervals and with positive length gaps away from them, subject to the further 
condition that $J_{\s}$ should lie in the correct gap as determined by the 
value of $\s(k+1)$.  
\end{proof}

\begin{proof}[Proof of Theorem \ref{cantor}]
Let $\mu = (\mu _1, \mu_2, \dots ) \in \D _{\infty}$ be given; we will construct
a function $f\in\A$ with $\bmu(f)=\mu$.  The construction proceeds in a sequence
of steps.

	{\it Step 1.}  Let $C$ denote the (usual) Cantor set in $[0,1]$.   By applying 
an order preserving transformation we can assume that the $\{J_{\s}\}$ 
given by Lemma \ref{l:J's} have the additional properties that $J_1=[\frac{1}{4}, 
\frac{3}{4}]$ and  $J_{\s} \subset [\frac{1}{8}, \frac{7}{8}]$ for all permutations 
$\s$.  For each permutation $\s$ choose an order preserving injection 
$\phi _{\s}: C \rightarrow J_{\s}$ with $\Leb(\phi_\s(C))=0$.   
Let $I_{\s}$ be as in Lemma \ref{l:I's}.  
Finally choose $\beta:  [\frac{1}{4}, \frac{3}{4}] \rightarrow C$ to be an order 
preserving bijection.

We define the function $f_1$ on a subset of $[0,1]$ recursively, as follows.  

\begin{itemize}
\item[--] 
First, on $ (\frac{1}{4}, \frac{3}{4}]=I _{(1)}= I_{(12)} \cup I_{(21)}$:
for each $\s\in S_2$, if $x\in I_\s$ then set $f_1(x) = \phi _\s \beta (x)$.
Thus for $\s \in S_2$, we have $f_1 (I_{\s}) \subset J_{\s}$.

\item[--]
Next, assuming $f_1$ is defined on $f_1 ^{i-1} (I_{(1)})$, we define $f_1$ 
on $f_1 ^i(I_{(1)})$ as follows.  Notice that 
$f_1 ^{i}(I_{(1)}) = \sqcup _{\s \in S_{i+2}} f_1^i (I_\s)$.  For all 
$\s \in S_{i+2}$ and for all  $ x \in f_1^i (I_\s)$ define 
$f_1(x) = \phi _{\s} (\phi_{\rho(\s)}^{-1} (x))$.  Thus we have
$f_1 ^i (I_\s) \in \phi _{\s}(C) \subset J_{\s}$.

\end{itemize}

We have now defined every power of $f_1$ on $(\frac{1}{4}, \frac{3}{4}]$; note that the
the domain of $f_1$, which we will call $D$, is $(\frac{1}{4}, \frac{3}{4}] $ union a measure 
zero set.  The purpose of this construction is that for any $\s\in S_n$ and $x\in I_\s$,
we now have $\bs_n^{f_1}(x) =\s$.  We set $f=f_1$ on $D$.

{\it Step $m$, $m\geq 2$.}  Denote by $K_m$ the measure zero set in 
$(0, 2^{-m}] \cup (1- 2^{-m}, 1]$ for which $f(x)$ has already been defined.  
Define $g_m: (0, 2^{-m}] \cup (1- 2^{-m}, 1] \rightarrow (0,1]$ to be the map 

$$g_m(x) = 
\begin{cases}
2^{m-1} x &\mbox{ for } x\in (0, 2^{-m}], \\
1 - 2^{m-1}(1-x) &\mbox{ for } x\in (1- 2^{-m}, 1].
\end{cases}
$$

For all $x \in g_m ^{-1} (D) \setminus K_m$,  define $f(x) =g_m^{-1} (f_1 (g_m(x)))$.  
Note that if $\s\in S_n$ and $x \in g^{-1}_m (I_{\s}) \setminus K_m$, 
we now have
\begin{equation} \label{e:gordersigma}
\bs_n^f(x) = \s.
\end{equation}

After step $m$, the domain of $f$ includes the interval $(2^{-m},1-2^{-m}]$,
so the iterative process defines $f$ on $(0,1)$.   It remains to show that 
$\bmu_n(f)=\mu _n$ for all $n$.  

For $\s \in S_n$,  define 
$\overline{I}_{\s}= I_\s \cup ( \cup_{i \geq 2}\  g^{-1}_i ( I_\s))$.  
By \eqref{e:gordersigma} we have $\bs_n^f(x)=\s$
for all $x\in \overline{I}_{\s} \setminus (\cup_m K_m)$.
Thus $\bmu_n(f) (\s) = | \overline{I}_{\s} \setminus (\cup_m K_m) |$.  Since 
$(\cup _m K_m)$ is of measure zero, 
$$ | \overline{I}_{\s} \setminus (\cup_m K_m) |= | \overline{I_{\s}}|=
\frac{1}{2} \mu _n(\s) + \sum_{i\geq 2} \  2^{-(i-1)} (\frac{1}{2} \mu_n (\s))= \mu _n (\s) . $$

We conclude that $\bmu_n(f)=\mu_n$ for each $n$, and $\bmu(f)=\mu$.
This completes the proof.
\end{proof}

\section{Open Questions}

Many interesting open questions remain about the relationship between functions and 
their distributions of order paterns.  

\subsection*{Measure preserving functions}
The bulk of the work presented here focused on the 
class of measure preserving functions; however to date we have been unable to answer 
Question 2 for this class.
\begin{question}
What is $\bmu(\AMP)$?
\end{question} 
There is an infinite version of the polytope, $P_\infty$, which consists of compatible sequences
$(\mu_1,\mu_2,\dots)$ with $\mu_i\in P_i$.  We do not know if the ``interior'' of $P_\infty$
is realizable by some $f\in\AMP$ (where the meaning of ``interior'' depends on the topology
on $P_\infty$), or if there is a drift condition for faces.  One concrete question is this:
\begin{question}
Is there $f\in\AMP$ with $\bmu_n(f)$ uniform for all $n$?
\end{question}

Corollary \ref{uniformdist} asserts that such an $f$ exists for any particular $n$,
and of course by Theorem \ref{cantor} there is $f\in\A$ that works for all $n$.
Yet there is no piecewise monotonic $f\in\A$ that works for all $n$,
because such an $f$ would have finite entropy (by \cite{bkp}, or Theorem \ref{entropy})
hence $|\bs_N(f)|$ would grow at most exponentially in $N$.
Note that such a function might be desirable as a random number 
generator, since from the point of view of order patterns, its iterates 
would look perfectly random.

In a somewhat different direction, if $\lambda$ is a reasonably nice 
measure on $I$ then the results of Section \ref{mainsec} hold with $\C=\AMP$ 
replaced by the collection $\C=\A^\lambda$ of functions which preserve $\lambda$.
(See Remark \ref{othermeasures}.)

\begin{question}
Are there measures $\lambda$ for which $\bmu_n(\A^\lambda)\ne\bmu_n(\AMP)$?
\end{question}

\subsection*{Other functions}
Returning to the broader Questions 1 and 2, there are several interesting
classes of functions $\C$ to study, such as (piecewise) continuous functions, 
polynomials, etc.  For example, if $\C=\A^{\rm{pc}}$ is the collection of piecewise
continuous functions, then it is easy to see that the only vertices of $\D_n$
contained in $\bmu_n(\A^{\rm{pc}})$ are $\chi_{(12\cdots n)}$ and 
$\chi_{(n\cdots21)}$.

\begin{question}
Is the closure of $\bmu_n(\A^{\rm{pc}})$ equal to $\D_n$?
\end{question}
\begin{question}
Is there a drift criterion which applies to $\A^{\rm{pc}}$?
\end{question}

Finally, it would be natural to study the extent to which 
the distributions $\bmu_n(f)$ determine $f$, for $f$ in a given class $\C$.

\begin{question}
For $\mu\in\bmu_n(\C)$, what is $\bmu_n^{-1}(\mu) \cap \C$?
\end{question}
\begin{question}
For $\mu=(\mu_1,\mu_2,\dots)\in\bmu(\C)$, what is $\bmu^{-1}(\mu) \cap \C$?
\end{question}

These questions are in a sense converse to Questions 1 and 2.

\end{document}